\newcommand{\rank}{\mbox{\rm rank}}
\newcommand{\aprank}{\mbox{\rm ap-rank}}
\newcommand{\RR}{{\mathbb R}}
\newcommand{\QQ}{{\mathbb Q}}
\newcommand{\ZZ}{{\mathbb Z}}
\newcommand{\NN}{\mathbb N}
\newcommand{\KK}{\mathbb K}
\newcommand{\cL}{{\mathcal L}}
\newcommand{\oplam}{\mbox{\Large $\curlywedge$}}
 \newtheorem{theorem}{Theorem}[section]
 \newtheorem{lemma}[theorem]{Lemma}
 \newtheorem{proposition}[theorem]{Proposition}
 \newtheorem{corollary}[theorem]{Corollary}
 \newtheorem{quest}[theorem]{Question}
 \newtheorem{definition}[theorem]{Definition}
 \newtheorem{remark}[theorem]{Remark}
 \theoremstyle{definition}
 \newtheorem{example}[theorem]{Example}
\begin{document}
\title[Higher dimensional progressions]{On higher dimensional arithmetic progressions in Meyer sets}
\author{Anna Klick}
\address{Department of Mathematical Sciences, MacEwan University \newline
\hspace*{\parindent}10700 -- 104 Avenue, Edmonton, AB, T5J 4S2, Canada}
\email{klicka@mymacewan.ca}

\author{Nicolae Strungaru}
\address{Department of Mathematical Sciences, MacEwan University \newline
\hspace*{\parindent}10700 -- 104 Avenue, Edmonton, AB, T5J 4S2, Canada\\
and \\
\newline
\hspace*{\parindent}Institute of Mathematics ``Simon Stoilow'' \newline
\hspace*{\parindent}Bucharest, Romania}
\email{strungarun@macewan.ca}
\urladdr{http://academic.macewan.ca/strungarun/}

\begin{abstract}In this paper we study the existence of higher dimensional arithmetic progression in Meyer sets. We show that the case when the ratios are linearly dependent over $\ZZ$ is trivial, and focus on arithmetic progressions for which the ratios are linearly independent. Given a Meyer set $\Lambda$ and a fully Euclidean model set $\oplam(W)$ with the property that finitely many translates of $\oplam(W)$ cover $\Lambda$, we prove that we can find higher dimensional arithmetic progressions of arbitrary length with $k$ linearly independent ratios in $\Lambda$ if and only if $k$ is at most the rank of the $\ZZ$-module generated by $\oplam(W)$. We use this result to characterize the Meyer sets which are subsets of fully Euclidean model sets.
\end{abstract}

\maketitle

\section{Introduction}
The Nobel Prize discovery of quasicrystals \cite{She} sparked many questions regarding the nature of solids with long-range aperiodic order. This discovery led to establishment of a new area of mathematics, the area of Aperiodic Order. The goal of this new field is to study objects which show long range order, but they are not necessarily periodic. \\

The best mathematical models for point sets which show long range order and are typically aperiodic were discovered in the earlier pioneering work of Y. Meyer \cite{Meyer}, and have been popularized in the field by R.V. Moody \cite{MOO,Moody} and J. Lagarias \cite{LAG1,LAG}. Called model sets, they are constructed via a cut-and-project scheme, a mechanism which starts with a lattice $\cL$ in a higher dimensional space which sits at an ``irrational slope" with respect to the real space $\RR^d$, cuts a strip around the real space $\RR^d$ of bounded width $W$ (called the ``window") and projects it on the real space $\RR^d$ (see Definition~\ref{def:Cps} for the exact definition). Under various weak conditions, the high order present in the lattice $\cL$ shows in the resulting model set, typically via a clear diffraction diagram, for example see \cite{BM,Hof1,LSS,CR,Martin2} just to name a few.

\medskip

Meyer sets are relatively dense subsets of model sets. As subsets of model sets they inherit part of the high order present in the former, which is evident in their characterisation via discrete geometry, harmonic analysis and algebraic properties \cite{LAG1,Meyer,MOO}. While they typically have positive entropy, and hence are usually not pure point diffractive (\cite{BLR}, compare \cite{BG2} for a discussion), they still show a highly ordered diffraction diagram \cite{NS2,NS11,NS20,NS21} with a relatively dense supported pure point spectrum \cite{NS1,NS2,NS5,NS21}.

\medskip

In \cite{KST} we showed a different type of order in Meyer sets in the form of the existence of arbitrarily long arithmetic progressions. More precisely, we proved that given a Meyer set $\Lambda \subseteq \RR^d$, for all $N \in \NN$ there exists some $R>0$ such that the set $\Lambda \cap B_R(x)$ contains an arithmetic progression of length $N$ for all $x \in \RR^d$. Moreover, we showed that van der Waerden type theorems hold in Meyer sets. More recently, related results have been investigated in \cite{AGNS,NAL}.

\smallskip

Consider now a Meyer set $\Lambda \subseteq \RR^d$. While $\Lambda$ spreads relatively dense in all directions of $\RR^d$, any arithmetic progression is one dimensional, and hence only gives partial information about the structure of Meyer sets. This suggest that one should look for higher dimensional arithmetic progressions, which is the goal of this paper. By a $m$-dimensional arithmetic progression we understand a set of the form
\[
A=\{ s+c_1r_1+\dots+c_mr_m : 0 \leq c_j \leq N_j \,, \forall 1 \leq j \leq m \}
\]
for some fixed $s,r_1,\dots,r_m \in \RR^d$ and $N_1,\dots,N_m \in \NN$. The elements $r_1,\dots,r_m$ are called the ratios and $\vec{N}=(N_1,\dots,N_m)$ is the vector length of the progression. The arithmetic progression is proper if all the elements $s+c_1r_1+\dots+c_mr_m$ are distinct.

By a standard application of the Chinese Remainder Theorem we show in Prop.~\ref{prop1} that for all $n \in \NN$ and $\vec{N} \in \NN^m$, every Meyer set contains a proper $n$-dimensional arithmetic progression of length $\vec{N}$. While the arithmetic progression is proper, every pair of ratios is linearly dependent over $\ZZ$ and hence the arithmetic progression is a subset of a 1-dimensional affine $\QQ$-space.

To make the question more interesting and meaningful, we add the extra condition that the ratios are linearly independent over $\ZZ$ (or equivalently over $\QQ$). Given a fully Euclidean model set $\oplam(W)$ in a CPS $(\RR^d, \RR^m, \cL)$, we show in Thm.~\ref{thm:ap rank model} that $\oplam(W)$ has $n$-dimensional arithmetic progressions of arbitrary length with linearly independent ratios if and only if $n \leq d+m$.

Next, for any Meyer set $\Lambda$, it is well known that there exists some fully Euclidean model set $\oplam(W)$ in some CPS $(\RR^d, \RR^m, \cL)$ and a finite set $F \subseteq \RR^d$ such that
\begin{equation}\label{eq121}
\Lambda \subseteq \oplam(W)+F \,.
\end{equation}
We show in Thm.~\ref{theo:meyer AP} that $\Lambda$ has $n$-dimensional arithmetic progressions of arbitrary length with linearly independent ratios if and only if $n \leq d+m$. This implies that, while in general there exists multiple fully Euclidean model sets $\oplam(W)$ and finite sets $F$ such that \eqref{eq121} holds, $m$ must be the same for all these model sets.

We complete the paper by answering the following question.
\begin{quest} Which Meyer sets $\Lambda \subseteq \RR^d$ are subsets of fully Euclidean model sets?
\end{quest}
To our knowledge no characterisation for these sets (which we call fully Euclidean Meyer sets) is known so far. We show in Thm.~\ref{fully euc meyer} that a Meyer set $\Lambda \subseteq \RR^d$ is a subset of a fully Euclidean model set if and only if has $n$-dimensional arithmetic progressions of arbitrary length with linearly independent ratios, where $n$ is the rank of the $\ZZ$-module generated by $\Lambda$. The characterisation is of number theory/combinatorics origin, emphasizing once again the nice order present in Meyer sets and model sets.\\

The paper is structured in the following way. In Section~\ref{s2} we give basic definitions and prove a higher-dimensional version of the van der Waerden's theorem \cite{vdW}. We prove, in Section~\ref{S3}, that  Meyer sets $\Lambda \in \RR^d$ contain arithmetic progression of arbitrary length and dimension, albeit with linearly dependent ratios. In Section~\ref{S4} we establish both the existence of arithmetic progression with linearly independent ratios, and a higher-dimensional van der Waerden-type result for fully Euclidean model sets. In Section~\ref{S5}, we extend these results to arbitrary Meyer sets in $\RR^d$. We complete the paper by characterising the fully Euclidean Meyer sets.\\

\section{Preliminaries}

In this section we review the basic definitions and results needed in the paper.

\subsection{Finitely generated free $\ZZ$-modules}\label{s2}

We start by recalling few basic results about finitely generated free $\ZZ$-modules. First recall \cite[Thm.~VIII.4.12]{AlgBook} that if $M$ is a free $\ZZ$-module, then all the bases of $M$ have the same cardinality. The common cardinality
of these bases is called the \textbf{rank} of $M$ and is denoted by $\rank(M)$. Also, a finitely generated $\ZZ$-module is free if and only if it is torsion free \cite[Thm.~12.5]{DF}.

\medskip

Next, let us recall the following two results which we will use a few times in the paper.

\begin{lemma} \cite[Thm.~VIII.6.1]{AlgBook}\label{lem:subsmodfree is free} Let $M$ be a free $\ZZ$-module of rank $k$. If $N$ is a submodule of $M$, then $N$ is free and
\[
\rank(N) \leq \rank(M) \,.
\]
\end{lemma}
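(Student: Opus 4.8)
The plan is to prove this by induction on the rank $k = \rank(M)$, since the only structural input we need about $\ZZ$ is that it is a principal ideal domain. For the base case $k=0$ we have $M=\{0\}$, so $N=\{0\}$ is free of rank $0$. For $k=1$ we may identify $M$ with $\ZZ$, and every submodule $N$ is then an ideal of $\ZZ$; such an ideal is either $\{0\}$ or of the form $d\ZZ$ with $d\neq 0$, and in both cases $N$ is free of rank at most $1$, which settles the initial step.

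For the inductive step, I would fix a basis $e_1,\dots,e_k$ of $M$ and set $M'=\langle e_1,\dots,e_{k-1}\rangle$, a free module of rank $k-1$. The key device is the coordinate projection $\pi\colon M\to\ZZ$ defined by $\pi(a_1e_1+\dots+a_ke_k)=a_k$, so that $\ker\pi=M'$. Given a submodule $N\subseteq M$, its image $\pi(N)$ is a submodule of $\ZZ$, hence either $\{0\}$ or $d\ZZ$ for some $d\neq 0$, and the two cases are handled separately.

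If $\pi(N)=\{0\}$, then $N\subseteq M'$ and the inductive hypothesis immediately gives that $N$ is free of rank at most $k-1\leq k$. If $\pi(N)=d\ZZ$, I would choose $x\in N$ with $\pi(x)=d$ and prove the decomposition $N=(N\cap M')\oplus\ZZ x$. Indeed, every $y\in N$ satisfies $\pi(y)=md$ for some $m\in\ZZ$, so $y-mx\in N\cap M'$, giving $N=(N\cap M')+\ZZ x$; and if $mx\in N\cap M'$ then $md=\pi(mx)=0$ forces $m=0$, so the sum is direct. Since $N\cap M'$ is free of rank at most $k-1$ by the inductive hypothesis and $\ZZ x\cong\ZZ$ is free of rank $1$, it follows that $N$ is free of rank at most $k$.

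The step demanding the most care is the verification that the decomposition $N=(N\cap M')\oplus\ZZ x$ is genuinely a direct sum, since this is precisely where both the freeness of $N$ and the rank bound are secured at once; the decisive point is that $d\neq 0$ and $\ZZ$ has no zero divisors, which both prevents $\ZZ x$ from carrying torsion and rules out a nontrivial overlap with $N\cap M'$.
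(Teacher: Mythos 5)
Your proof is correct: the induction on $k$, the projection $\pi$ onto the last coordinate, the case split on $\pi(N)$ being $\{0\}$ or $d\ZZ$, and the decomposition $N=(N\cap M')\oplus \ZZ x$ are all sound, and the directness argument (using $d\neq 0$ and that $\ZZ$ has no zero divisors) is exactly what is needed to secure both freeness and the rank bound. Note, however, that the paper does not prove this lemma at all: it simply cites it as Thm.~VIII.6.1 of Grillet's \emph{Abstract Algebra}, so there is no internal argument to compare against. Your argument is the standard textbook proof for modules of finite rank over a principal ideal domain, which is all the paper ever uses; the cited reference proves a stronger statement (submodules of free modules over an arbitrary PID, with no finiteness assumption on the rank, via a transfinite version of your induction over a well-ordered basis). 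So what you lose in generality relative to the citation, you gain in self-containedness: your proof makes the paper's dependence on the literature for this lemma unnecessary, and it makes visible that the only facts used are that $\ZZ$ is a PID and that ranks add over direct sums.
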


In particular, Lemma~\ref{lem:subsmodfree is free} implies the following result.

\begin{corollary}\label{cor1} Let $M$ be a free $\ZZ$-module of rank $k$. If $v_1,\dots,v_m \in M$ are linearly independent over $\ZZ$ then $m \leq k$.
\end{corollary}
\begin{proof}
$v_1,\dots,v_m$ are a basis for the submodule of $M$ generated by $\{ v_1,\dots,v_m \}$. The claim follows from Lemma~\ref{lem:subsmodfree is free}.
\end{proof}

Next, let us recall the following result about $\ZZ$-submodules of the same rank as the full module.

\begin{lemma}\label{nM N} Let $M$ be a finitely generated free $\ZZ$-module and let $N$ be a submodule of $M$. If
\[
\rank(M)= \rank(N)
\]
then, there exists some positive integer $n$ such that
\[
\{ nv :v \in M \}=: n\cdot M \subseteq N \,.
\]
\end{lemma}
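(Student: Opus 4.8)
The plan is to reduce the statement to a single determinant computation over $\ZZ$. Since $M$ is free of rank $k := \rank(M)$, I would first fix a basis $e_1, \dots, e_k$ of $M$. By Lemma~\ref{lem:subsmodfree is free} the submodule $N$ is free, and by hypothesis $\rank(N) = k$, so I may also fix a basis $f_1, \dots, f_k$ of $N$ consisting of exactly $k$ elements. Because $N \subseteq M$, each $f_j$ is an integer linear combination of the $e_i$, say $f_j = \sum_{i=1}^k c_{ij} e_i$ with $c_{ij} \in \ZZ$; this defines a square integer matrix $C = (c_{ij})$.

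The key step is to observe that $\det(C) \neq 0$. Indeed, the $f_j$ are linearly independent over $\ZZ$ (being a basis of $N$), and since $M$ sits inside the $\QQ$-vector space $M \otimes_{\ZZ} \QQ \cong \QQ^k$ in which the $e_i$ form a basis, linear independence of the $f_j$ over $\ZZ$ is equivalent to linear independence of the columns of $C$ over $\QQ$. Hence $C$ has rank $k$ over $\QQ$, so $\det(C) \neq 0$. This is exactly where the equal-rank hypothesis enters: it is what forces $C$ to be a \emph{square} matrix with nonzero determinant, and it is the only genuinely nontrivial point in the argument.

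With $n := |\det(C)|$, which is a positive integer, I would conclude via the adjugate (equivalently, Cramer's rule). The adjugate $\mathrm{adj}(C)$ has integer entries and satisfies $\mathrm{adj}(C)\, C = \det(C)\, I$. Collecting the relations $f_j = \sum_i c_{ij} e_i$ into matrix form and inverting over $\QQ$ using $C^{-1} = \det(C)^{-1}\,\mathrm{adj}(C)$, one reads off that each $\det(C)\, e_i$ is an integer linear combination of $f_1, \dots, f_k$, and therefore lies in $N$. Consequently $n\, e_i \in N$ for every $i$, and since $e_1, \dots, e_k$ generate $M$ it follows that $n\cdot M \subseteq N$, as required.

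I expect no serious obstacle here: the whole statement is essentially the content of the Smith normal form (from which one could alternatively read off $n$ as the product of the elementary divisors), but the adjugate argument above is self-contained and avoids invoking that machinery. The only point requiring care is the verification that $\det(C) \neq 0$, for which the hypothesis $\rank(M) = \rank(N)$ is indispensable.
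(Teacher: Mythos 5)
Your proof is correct, but it follows a genuinely different route from the paper. The paper disposes of this lemma in a single line by citing \cite[Thm.~VIII.6.1]{AlgBook}, i.e.\ the stacked-basis (Smith normal form) theorem for submodules of free modules over a PID: there is a basis $y_1,\dots,y_k$ of $M$ and nonzero integers $n_1,\dots,n_k$ such that $n_1y_1,\dots,n_ky_k$ is a basis of $N$, whence $n\cdot M\subseteq N$ with $n=|n_1\cdots n_k|$. You avoid that structure theorem entirely: you use only the freeness of $N$ and the equality of ranks (Lemma~\ref{lem:subsmodfree is free} plus the hypothesis) together with an elementary determinant computation, and you correctly isolate $\det(C)\neq 0$ as the precise point where the equal-rank hypothesis is used. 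Your adjugate step is sound: from $\mathrm{adj}(C^{T})\,C^{T}=\det(C)\,I$ one reads off directly, inside $M$ and without any need to pass to $M\otimes_{\ZZ}\QQ$, that $\det(C)\,e_i$ is an integer combination of $f_1,\dots,f_k$, hence lies in $N$; the rational vector space is only a convenience for seeing $\det(C)\neq 0$. Comparing the two: the paper's citation keeps the exposition to one line but leans on a heavier classification result, while your argument is self-contained, uses nothing beyond linear algebra over $\ZZ$, and yields an explicit constant $n=|\det(C)|$ (which is in fact the index $[M:N]$, finite exactly because the ranks agree). Both constitute complete proofs of the statement.
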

\begin{proof}
This follows from \cite[Thm.~VIII.6.1]{AlgBook}.
\end{proof}

\medskip

We complete the section by proving the following simple result. This result is surely known, but we could not find a good reference for it.

\begin{lemma}\label{lem:12} Let $M$ be a finitely generated free $\ZZ$-module with $k=\rank(M)$ and let $S$ be a generating set for $M$. Then, there exists $k$ linearly independent elements $v_1,\dots,v_k \in S$.
\end{lemma}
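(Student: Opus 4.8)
The plan is to select a maximal linearly independent subset of $S$ and show that it must already have $k$ elements. First I would invoke Corollary~\ref{cor1} to note that every linearly independent subset of $S$ (being a linearly independent subset of $M$) has at most $k=\rank(M)$ elements; hence among all linearly independent subsets of $S$ there is one of maximal cardinality, say $\{v_1,\dots,v_r\}$ with $r\le k$. Since these elements are linearly independent they form a basis of the submodule $N:=\langle v_1,\dots,v_r\rangle$ that they generate, so $\rank(N)=r$. It then suffices to prove $r=k$.

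The key step is to exploit maximality. For every $s\in S$ the enlarged set $\{v_1,\dots,v_r,s\}$ is linearly dependent, so there is a nontrivial integer relation $a_0 s+a_1 v_1+\dots+a_r v_r=0$. The coefficient $a_0$ cannot vanish, for otherwise we would obtain a nontrivial relation among $v_1,\dots,v_r$; therefore $a_0 s\in N$, and some positive multiple of $s$ lies in $N$.

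Next I would pass from the (possibly infinite) generating set $S$ to a finite one. Since $M$ is finitely generated, each of finitely many generators of $M$ is a finite $\ZZ$-combination of elements of $S$, and collecting the elements of $S$ that occur yields a finite subset $S_0\subseteq S$ with $\langle S_0\rangle=M$. For each $s\in S_0$ choose a positive integer $m_s$ with $m_s s\in N$, and set $n=\prod_{s\in S_0}m_s$. Then $n s\in N$ for every $s\in S_0$, and since $S_0$ generates $M$ this gives $n\cdot M\subseteq N$.

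Finally I would compare ranks. Multiplication by $n$ carries a basis of $M$ to a basis of $n\cdot M$ (it is injective because $M$ is torsion free), so $\rank(n\cdot M)=k$. From $n\cdot M\subseteq N\subseteq M$ together with Lemma~\ref{lem:subsmodfree is free} we obtain $k=\rank(n\cdot M)\le\rank(N)\le\rank(M)=k$, forcing $\rank(N)=k$ and hence $r=k$. The main obstacle is precisely the reduction to a finite generating subset and the uniformisation of the individual multipliers into a single integer $n$: because $S$ may be infinite the numbers $m_s$ need not be bounded, so one cannot multiply over all of $S$, and the finite-generation of $M$ is what makes $n\cdot M\subseteq N$ available; once this containment is in hand the rank comparison is immediate.
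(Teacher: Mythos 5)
Your proof is correct, and it reaches the conclusion by a genuinely different route than the paper's. Both arguments share the same opening move: take a maximal linearly independent subset $\{v_1,\dots,v_r\}$ of $S$, let $N$ be the submodule it generates, and use maximality to force a nonzero integer multiple of every $s\in S$ into $N$. (One small wording fix: when $s$ is itself one of the $v_i$, the enlarged set is not dependent --- it equals the original set --- but then $1\cdot s\in N$ trivially, which is all you need; the paper splits into these two cases explicitly.) From there the paths diverge. The paper invokes the stacked-basis theorem \cite[Thm.~12.4]{DF} to produce a basis $y_1,\dots,y_k$ of $M$ such that $n_1y_1,\dots,n_my_m$ is a basis of $N$, and argues by contradiction: writing $y_k$ as a finite $\ZZ$-combination of elements of $S$ and clearing multiples puts a nonzero multiple of $y_k$ into $N$, which would make $y_1,\dots,y_m,y_k$ linearly dependent. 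You instead uniformize the multipliers over a finite generating subset $S_0\subseteq S$ (extracted using the finite generation of $M$) to obtain a single integer $n$ with $n\cdot M\subseteq N$, and then compare ranks: multiplication by $n$ is injective on the torsion-free module $M$, so $\rank(n\cdot M)=k$, and Lemma~\ref{lem:subsmodfree is free} applied to $n\cdot M\subseteq N\subseteq M$ forces $\rank(N)=k$, hence $r=k$. Your version is more elementary in that it needs only rank monotonicity for submodules and torsion-freeness, not the stacked-basis theorem, and it is in effect the converse direction of Lemma~\ref{nM N}: you prove $n\cdot M\subseteq N$ and deduce equality of ranks, where that lemma goes the other way. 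A minor further difference is that you avoid Zorn's lemma by taking a subset of maximal cardinality, which exists because Corollary~\ref{cor1} bounds all such cardinalities by $k$. The price of your route is the extra bookkeeping of passing to the finite subset $S_0$ and the single multiplier $n$ --- a step you correctly identify as the crux --- which the paper sidesteps because its contradiction argument only ever needs to handle the single element $y_k$.
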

\begin{proof}

By a standard application of Zorn's lemma, there exist some $S'=\{ v_1,\dots, v_m\} \subseteq S$ which is a maximal linearly independent subset. Let $N$ be the submodule of
$M$ generated by $S'$. Then, $\rank(N)=m$. To complete the proof, we show that $m=k$. By \cite[Thm.~12.4]{DF}, there exists a basis $y_1,\dots, y_k$ of $M$ and $n_1,\dots, n_m \in \ZZ$ such that $n_1y_1,\dots,n_my_m$ is a basis for $N$.

Now assume by contradiction that $m <k$. Since $S$ spans $M$, there exists elements $x_1,\dots, x_l \in S$ and $k_1,\dots,k_l \in \ZZ$ such that
\[
y_k =k_1x_1+\dots+k_lx_l \,.
\]

Next, note that for each $1 \leq j \leq l$ we either have $x_j \in S'$ or $S' \cup x_j$ is linearly dependent. In both cases, there exists some non-zero $f_j \in \ZZ$
such that $f_j x_j \in N$. Let $f =f_1\cdot \ldots \cdot f_l$. Then $f$ is a non-zero integer and
\[
fy_k \in N \,.
\]
Since $n_1y_1,\dots,n_my_m$ is a basis for $N$, $fy_k$ can be written as a linear combination of $n_1y_1,\dots,n_my_m$. As $f \neq 0$, this gives that $y_1,\dots,y_m,y_k$ are linearly dependent over $\ZZ$, which is a contradiction. Thus, $m=k$.
\end{proof}

\medskip

Note that one can alternately prove the above lemma by embedding $M$ into a $\QQ$-vector space and looking at the subspace generated by $S'$.

\bigskip

\subsection{Higher dimensional arithmetic progressions}

In this section we look at higher dimensional arithmetic progressions. Let us start with the following definition.

\begin{definition} A \textbf{higher dimensional arithmetic progression} is a set of the form
\[
A := \{s+ c_1r_1 + c_2r_2+\dots+c_nr_n : 0\leq c_1 \leq N_1, 0 \leq c_2 \leq N_2,\dots, 0 \leq c_n \leq N_n\} \,
\]
for some fixed vectors $s, r_1,\dots,r_n \in \RR^d$ and some arbitrarily fixed natural numbers $N_1,\dots,N_n \in \mathbb{N}$. If the elements in $A$ are distinct, then the progression is called \textbf{proper}. We will call $n$ the \textbf{dimension} of the projection and $\vec{N}=(N_1,\dots,N_n)$ the \textbf{vector length} of the progression. The \textbf{rank} of the projection is the rank of the $\ZZ$-module generated by the ratios $\{ r_1,\dots,r_n \}$. We say that the arithmetic progression is an \textbf{li-arithmetic progression} if the ration $r_1,\ldots,r_n$ are linearly independent over $\ZZ$.
\end{definition}

In the remainder of the paper we will simply refer to a higher dimensional arithmetic progression simply as an "arithmetic progression".

\begin{remark}
The rank of a generalized arithmetic progression is simply the largest cardinality of any $\ZZ$-linearly independent subset of $\{r_1,\dots,r_n\}$. It is obvious that the rank of any arithmetic progression is at most its dimension, with equality if and only if the arithmetic progression is li-arithmetic progression.
\end{remark}

\smallskip

Next, let us note here that since our goal is to study the existence of arithmetic progressions of arbitrary length, $\vec{N}=(N_1,\ldots, N_n)$, it is sufficient to restrict to the case $N_1=N_2=\ldots=N_n=:N$. In this case, we will say that the length of the progression is $N \in \NN$.

\smallskip

Next, let us review some standard notations. As usual, for $N \in \NN$ with $N \geq 1$ we denote by $[N]$ the set
\[
[N] :=\{0,1,2,\dots, N\} = \NN \cap [0,N] \,.
\]
$[N]^d$ denotes the Cartesian product of $d$ copies of $[N]$, that is
\[
[N]^d=\{ (k_1,\dots, k_d) : k_j \in [N] \,.  \forall 1 \leq j \leq d \} \,.
\]
We will also need the following definition.

\begin{definition} A \textbf{$d$-dimensional grid of depth $n$} is a set of the form
\[
[k_1,\dots,k_d;l_1,\dots,l_d;n]:=\{ (l_1+m_1k_1,l_2+ m_2k_2,\dots,l_d+m_dk_d) : m_1,\dots, m_d \in [n] \}
\]
for some fixed positive integers $k_1,\dots, k_d$ and fixed $l_1, \dots l_d$.
\end{definition}

\smallskip

Note that a $d$-dimensional grid of depth $n$ is simply an arithmetic progression inside $\ZZ^d$ of dimension $d$ with the ratios
\[
r_j= k_j e_j \,
\]
for some $k_j \in \NN$, where $e_j=(0,0,\dots, 1, 0,\dots,0)$ is the canonical basis.

\smallskip
We now prove the following higher dimensional version of van der Waerden's theorem. For the analogous statement of the  one-dimensional version of this theorem, we refer the reader to \cite{KST, vdW}. Note that are already higher-dimensional generalisations of van der Waerdens's theorem, such as the  Gallai-Witt theorem, see \cite{Maddux} for brief discussion and references therein.

\begin{theorem}[van der Waerden in $\ZZ^d$] \label{theo:vdW} Given any natural numbers $k,r,d$, there exists a number $W(r, k,d)$, such that, no matter how we colour $\ZZ^d$ with $r$ colours, for each $N \geq  W(r,k,d)$, we can find a monochromatic d-dimensional grid\\ $[k_1,\dots,k_d; l_1, \dots, l_d;k] \subseteq [N]^d$ of depth $k$.
\end{theorem}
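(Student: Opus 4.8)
The plan is to deduce this multidimensional statement from the Hales--Jewett theorem, via the standard base-$(k+1)$ evaluation map that turns combinatorial lines into grids. First I would observe that it suffices to produce a single finite bound $W=W(r,k,d)$ with the property that \emph{every} $r$-colouring of the finite box $[W]^d$ contains a monochromatic $d$-dimensional grid of depth $k$. Indeed, given such a $W$ and any $N\ge W$, an arbitrary $r$-colouring of $\ZZ^d$ restricts to an $r$-colouring of $[W]^d\subseteq[N]^d$, and the monochromatic grid found inside $[W]^d$ already lies in $[N]^d$, as required.

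For the core of the argument I would apply the Hales--Jewett theorem to the alphabet $A:=\{0,1,\dots,k\}^d$, which has $|A|=(k+1)^d$ letters. Let $n:=\mathrm{HJ}\bigl(r,(k+1)^d\bigr)$ be the corresponding Hales--Jewett number, so that every $r$-colouring of $A^n$ admits a monochromatic combinatorial line. Recall that such a line is given by a nonempty set $M\subseteq\{1,\dots,n\}$ of moving coordinates together with fixed letters $a^{(i)}\in A$ for $i\notin M$; its points are $P(\lambda)$, $\lambda\in A$, where $P(\lambda)$ carries the letter $\lambda$ in each moving coordinate and $a^{(i)}$ in each non-moving coordinate $i$. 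The key device is the evaluation map $\phi\colon A^n\to\ZZ^d$,
\[
\phi\bigl(a^{(1)},\dots,a^{(n)}\bigr):=\sum_{i=1}^{n}(k+1)^{\,i-1}\,a^{(i)} .
\]
A direct computation shows that $\phi$ carries a combinatorial line onto a grid: with the notation above,
\[
\phi\bigl(P(\lambda)\bigr)=s+t\,\lambda,\qquad s:=\sum_{i\notin M}(k+1)^{\,i-1}a^{(i)},\quad t:=\sum_{i\in M}(k+1)^{\,i-1}\ge 1 .
\]
As $\lambda$ ranges over $A=\{0,\dots,k\}^d$, the set $\{\phi(P(\lambda)):\lambda\in A\}$ is precisely $s+t\{0,\dots,k\}^d$, i.e.\ a $d$-dimensional grid of depth $k$ all of whose common differences equal the positive integer $t$.

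To finish, given an $r$-colouring $\chi$ of $\ZZ^d$ I would pull it back to the colouring $c:=\chi\circ\phi$ of $A^n$; by Hales--Jewett $c$ has a monochromatic combinatorial line, whose $\phi$-image is then a $\chi$-monochromatic grid of depth $k$. Since every coordinate of $\phi\bigl(a^{(1)},\dots,a^{(n)}\bigr)$ lies between $0$ and $k\sum_{i=1}^{n}(k+1)^{i-1}=(k+1)^n-1$, the entire grid is contained in $[(k+1)^n-1]^d$. Hence $W(r,k,d):=(k+1)^{\,n}-1$ with $n=\mathrm{HJ}\bigl(r,(k+1)^d\bigr)$ does the job. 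The only points requiring care are the bookkeeping that the moving coordinates contribute the common difference $t$ while the frozen coordinates contribute the base point $s$ (so that a combinatorial line maps \emph{onto} a grid, injectively since $t\ge1$), and the elementary estimate confining the image of $\phi$ to $[N]^d$; note also that the statement asks only for \emph{some} grid, so producing one with all common differences equal is legitimate.

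I expect the main obstacle to be conceptual rather than computational: correctly encoding a $d$-dimensional depth-$k$ grid as the image of a single combinatorial line over the $(k+1)^d$-letter alphabet, and choosing the base $k+1$ so that distinct letters never collide under $\phi$. A fully self-contained alternative avoids Hales--Jewett and argues by induction on $d$: the base case $d=1$ is the ordinary van der Waerden theorem \cite{vdW} (a monochromatic progression of length $k+1$ is a $1$-dimensional grid of depth $k$), and the inductive step is handled by a colour-focusing argument feeding into the inductive hypothesis; this route is more delicate to set up but uses only the one-dimensional theorem.
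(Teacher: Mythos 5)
Your proof is correct, but it takes a genuinely different route from the paper's. You derive the statement from the Hales--Jewett theorem via the base-$(k+1)$ evaluation map $\phi(a^{(1)},\dots,a^{(n)})=\sum_{i}(k+1)^{i-1}a^{(i)}$ on the alphabet $\{0,\dots,k\}^d$: a monochromatic combinatorial line maps onto a monochromatic set $s+t\{0,\dots,k\}^d$ with $t\ge 1$, which is exactly a $d$-dimensional grid of depth $k$ with all common differences equal to $t$, and your opening reduction (restricting a colouring of $\ZZ^d$ to $[W]^d\subseteq[N]^d$) correctly handles the quantifier over $N$. The paper instead argues by induction on $d$ using only the one-dimensional van der Waerden theorem: for each slice $\ZZ^d\times\{j\}$ the inductive hypothesis yields a monochromatic $d$-dimensional grid $M_j\subseteq [W(r,k,d)]^d$ of some colour $c(j)$; the pair $(M_j,c(j))$ is treated as a new colour of the index $j$ (there are only finitely many such pairs), and one-dimensional van der Waerden produces an arithmetic progression of slice indices sharing the same grid and colour, whose product with that grid is the desired $(d+1)$-dimensional grid. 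Your approach is shorter and in fact proves something slightly stronger (a homothetic copy of $[k]^d$, i.e.\ all common differences equal, which is Gallai's theorem in its strong form), but it invokes Hales--Jewett as a black box, a considerably deeper input than the one-dimensional theorem the paper cites; the paper's slicing induction is more elementary and self-contained relative to \cite{vdW}, and it produces the explicit recursive bound $W(r,k,d+1)\le W(|A|\cdot r,k,1)$ recorded in the remark after the theorem. Your closing sketch of an inductive alternative is essentially the paper's argument, except that the inductive step there is this slicing/product construction rather than a colour-focusing argument.
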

\begin{proof}
We prove the claim via induction on $d$.

$P(1)$ is the standard van der Waerden theorem \cite{vdW}.

$P(d) \Rightarrow P(d+1)$: Let $r$ and $k$ be given. Let $A$ be the set of all $d$-dimensional grids of depth $k$ which are subsets of $[W(r,k,d)]^d$. Now set
\[
W(r,k,d+1)=W(|A|\cdot r, k, 1)\,.
\]
We show that this choice works. Note first that the van der Waerden theorem is equivalent to the fact that given a set $X$ with $|A| \cdot r$ elements, for any function $v: \NN \to X$ and any $N \geq W(|A|\cdot r, k, 1)$, there exists an element $x \in X$ such that,
\begin{displaymath}
v^{-1}(x) \cap [N]
\end{displaymath}
contains an arithmetic progression of length $k$.

Now, consider any colouring of $\ZZ^{d+1}$ with $r$ colours $c_1,\dots,c_r$. Let $N \geq  W(r,k,d+1)$. Next, for each $1 \leq j \leq N$ consider the colouring of $\ZZ^d \times \{ j \} \subseteq \ZZ^{d+1}$. By $P(d)$, the set $[W(r,k,d)]^d \times \{ j \}$ contains a  monochromatic grid $M_j$ of depth $k$. Let $c(j)$ be the colour of this grid. We can now define a function
\begin{align*}
v&: [N] \to A \times \{c_1,\dots, c_r\}  \\
v(j)&=(M_j, c(j)) \,.
\end{align*}
Then, there exists some $(M,c_l) \in A \times \{c_1,\dots, c_r\}$ such that,
\begin{displaymath}
v^{-1}(M, c_l) \cap [N]
\end{displaymath}
contains an arithmetic progression of length $k$. Let $l_{d+1}, k_{d+1}$ be do that $l_{d+1}+m k_{d+1} \in v^{-1}(M, c_l) \cap [N]$ for all $m \in [k]$. Next, since $M$ is a monochromatic
grid of depth $k$, there exists some $k_1,\dots,k_d;l_1,\dots,l_d$ such that
\[
M=[k_1,\dots,k_d;l_1,\dots,l_d;k] \subseteq [W(r,k,d)]^d \,.
\]
Then, by construction, the grid
\[
[k_1,\dots,k_d, k_{d+1};l_1,\dots,l_d,l_{d+1};k] \subseteq [W(r,k,d)]^d \times [N] \subseteq [N]^{d+1}
\]
is monochromatic of colour $c_l$.  This proves the claim.
\end{proof}

\begin{remark} If we denote by $W(r, k,d)$ the smallest value which satisfies Thm.~\ref{theo:vdW}, then it is obvious that $W(r,k, 1)=W(r,k)$. Moreover, the proof above yields the terrible upper-bound
\[
W(r,k,d+1) \leq W(l, k, 1)\,
\]
where $l=|A|\cdot r$. Note that,
\[
|A|= \left( \sum_{j=0}^{W(n,k,d)} \lfloor \frac{ W(n,k,d)-j }{d} \rfloor \right)^d
\]
which can be seen by observing that, for each $1 \leq i \leq d$ and for every particular choice $1 \leq j \leq W(n,k,d)$ we have
\[
1 \leq l_j \leq  \frac{ W(n,k,d)-j }{d} \,.
\]
\end{remark}

\bigskip

\subsection{Meyer sets and model sets}

In this subsection we review the notion of model sets and Meyer sets in $\RR^d$. For a more detailed review of these definitions and properties we refer the reader to the monograph \cite{TAO} and to \cite{MOO,Moody}.

\medskip

We start by reviewing some of the basic definitions for point sets.

\begin{definition}\label{defps} Let $\Lambda \subseteq \RR^d$ be a point set. We say that $\Lambda$ is
\begin{itemize}
    \item{} \textbf{relatively dense} if there exists some $R>0$ such that for all $x \in \RR^d$ the set $\Lambda \cap B_{R}(x)$ contains at least one point.
      \item{} \textbf{uniformly discrete} if there exists some $r>0$ such that for all $x \in \RR^d$ the set $\Lambda \cap B_{r}(x)$ contains at most a point.
      \item{} \textbf{Delone} if $\Lambda$ is relatively dense and uniformly discrete.
       \item{} \textbf{locally finite} if for all $R>0$ and  $x \in \RR^d$ the set $\Lambda \cap B_{R}(x)$ is finite.
\end{itemize}
\end{definition}

\smallskip

The above definitions are usually stated for arbitrary locally compact Abelian groups (LCAG) $G$ using open and compact sets, respectively. It is easy to see that in the case of $G=\RR^d$ the usual definitions are equivalent to Definition~\ref{defps}.

\smallskip

Next, in the spirit of \cite{MOO} we introduce next the following definition.

\begin{definition}  We say that two Delone sets $\Lambda_1,\Lambda_2$ are \textbf{equivalent by finite translations} if there exists finite sets $F_1,F_2$ such that
\begin{align*}
\Lambda_1 &\subseteq \Lambda_2+F_2 \\
\Lambda_2 &\subseteq \Lambda_1+F_1 \,.
\end{align*}
\end{definition}

\begin{remark}
\begin{itemize}
    \item[(a)] It is easy to see that being equivalent by finite translations is an equivalence relation on the set of Delone subsets of $\RR^d$.
    \item[(b)] By replacing $F_1,F_2$ by $F=F_1 \cup F_2$ one can assume without loss of generality that $F_1=F_2$.
\end{itemize}
\end{remark}

\bigskip
Next we review the notion of cut and project schemes and model sets.

\begin{definition}\label{def:Cps} By a \textbf{cut and project scheme}, or simply CPS, we understand a triple $(\RR^d, H, \cL)$ consisting of $\RR^d$, a LCAG $H$, together with a lattice (i.e. a discrete co-compact subgroup) $\cL \subset \RR^d \times H$, with the following two properties:
\begin{itemize}
    \item{} The restriction $\pi^{\RR^d}|_{\cL}$ of the canonical projection $\pi^{\RR^d}: \RR^d \times H \to \RR^d$ to $\cL$ is a one-to-one function.
    \item{} The image $\pi^H(\cL)$ of the $\cL$ under the canonical projection $\pi^H: \RR^d \times H \to H$ is dense in $H$.
\end{itemize}
In the special case where $H = \RR^m$ then we refer to $(\RR^d, \RR^m, \cL)$ as a \textbf{fully Euclidean} CPS.
\end{definition}
Next we define in the usual way, $L:= \pi^{\RR^d}(\cL)$ and $\star: L \to H$, known as the $\star$-mapping, by
\[
\star= \pi^H \circ ( \pi^{\RR^d}|_{\cL})^{-1} \,.
\]
This allows us rewrite
\[\cL= \{ (x,x^\star) : x \in L \} \,.\]
Note that the range of the $\star$-mapping is\[\star(L)=:L^\star= \pi^H(\cL) \,.\]

\smallskip

We can summarize the CPS in the diagram below.

\smallskip
\begin{center}
\begin{tikzcd}[remember picture]
\RR^d & \arrow[swap]{l}{\pi^{\RR^d}} \RR^d \times H \arrow{r}{\pi^H}& H \\
\pi^{\RR^d}(\cL)&  \arrow[right]{l}{1-1} \cL  \arrow[left]{r}{\text{dense}} & H \\
L \arrow[swap]{rr}{\star} &  & L^\star \\
\end{tikzcd}
\end{center}

\begin{tikzpicture}[overlay,remember picture]
\path (\tikzcdmatrixname-1-1) to node[midway,sloped]{$\supset$}
(\tikzcdmatrixname-2-1);
\path (\tikzcdmatrixname-1-2) to node[midway,sloped]{$\supset$}
(\tikzcdmatrixname-2-2);
\path (\tikzcdmatrixname-1-3) to node[midway,sloped]{$=$}
(\tikzcdmatrixname-2-3);
\path (\tikzcdmatrixname-2-1) to node[midway,sloped]{$=$}
(\tikzcdmatrixname-3-1);
\path (\tikzcdmatrixname-2-3) to node[midway,sloped]{$\supset$}
(\tikzcdmatrixname-3-3);
\end{tikzpicture}
\medskip

\smallskip

We can now review the definition of model sets.

\begin{definition}
Given a CPS $(\RR^d,H,\cL)$ and some subset $W \subseteq H$, we denote by $\oplam(W)$ its pre-image under the $\star$-mapping, that is
\begin{displaymath}
\oplam(W):=\{ x \in L : x^\star \in W \} = \{ x \in \RR^d : \exists y \in W \, \mbox{ such that } (x,y) \in \cL \} \,.
\end{displaymath}

When $W$ is precompact and has non-empty interior, the set $\oplam(W)$ is called a \textbf{model set}. If furthermore $H=\RR^m$ for some $m$, then $\oplam(W)$ is called a \textbf{fully Euclidean model set}.
\end{definition}
We want to emphasize here that the condition $W$ has non-empty interior is essential later in the paper in the proof of Thm.~\ref{thm:ap rank model}.

\medskip
Next, let us recall the following result.
\medskip

\begin{proposition}\label{prop2.6}\cite[Prop.~2.6]{MOO}\cite[Prop.~7.5]{TAO}  Let $(\RR^d, H, \cL)$ be a CPS and $W \subseteq H$.
\begin{itemize}
    \item[(a)] If $W \subseteq H$ is pre-compact, then $\oplam(W)$ is uniformly discrete.
    \item[(b)] If $W \subseteq H$ has non-empty interior, then $\oplam(W)$ is relatively dense.
\end{itemize}
In particular, every model set is a Delone set.
\end{proposition}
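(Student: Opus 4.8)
The plan is to treat the two parts separately, deriving (a) from the discreteness of $\cL$ and (b) from its co-compactness, with the density of $L^\star$ doing the essential work in (b). The closing assertion that every model set is Delone is then immediate, since a model set has $W$ precompact with non-empty interior, so both (a) and (b) apply.

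For part (a) I would work with the difference set. If $x,y\in\oplam(W)$ then $(x-y,x^\star-y^\star)\in\cL$, and since $\star$ is a group homomorphism and $x^\star,y^\star\in W$, the internal part $x^\star-y^\star$ lies in $W-W$, which is precompact because $W$ is. Hence any two points of $\oplam(W)$ within distance $1$ in $\RR^d$ produce a lattice point in the compact set $\overline{B_1(0)}\times\overline{W-W}$. As $\cL$ is discrete, this set meets $\cL$ in only finitely many points, and any nonzero such point has nonzero first coordinate, since $\pi^{\RR^d}|_{\cL}$ is injective. Letting $r$ be the minimum of the finitely many positive norms of these first coordinates forces any two distinct points of $\oplam(W)$ to be at distance at least $\min(r,1)>0$, which is uniform discreteness.

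For part (b) I would use co-compactness to write $\RR^d\times H=\cL+C$ for some compact $C$, which after enlarging I may take to be a product $C_1\times C_2$ of compact sets $C_1\subseteq\RR^d$ and $C_2\subseteq H$. Fix an interior point $w_0$ of $W$ and a symmetric open neighbourhood $V_0$ of $0$ in $H$ with $w_0+V_0\subseteq W$. The key step, where the hypotheses interact, is that density of $L^\star$ gives $L^\star+V_0=H$, so the translates $\ell_i^\star+V_0$ cover the compact set $C_2$ for finitely many $\ell_1,\dots,\ell_n\in L$. Now for arbitrary $x\in\RR^d$, decompose $(x,w_0)=(\ell,\ell^\star)+(c_1,c_2)$ with $(\ell,\ell^\star)\in\cL$ and $(c_1,c_2)\in C_1\times C_2$; picking $i$ with $c_2\in\ell_i^\star+V_0$ and passing to the lattice point $(\ell+\ell_i,\ell^\star+\ell_i^\star)$ shifts the internal coordinate to $w_0-(c_2-\ell_i^\star)\in w_0+V_0\subseteq W$, so $\ell+\ell_i\in\oplam(W)$, while its real part stays within distance $\sup_{c\in C_1}\lvert c\rvert+\max_i\lvert\ell_i\rvert$ of $x$. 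This bound is independent of $x$, giving relative density.

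The main obstacle is exactly this internal-coordinate control in (b): co-compactness alone produces a lattice point near $(x,w_0)$ whose star $w_0-c_2$ may be pushed far outside $W$ by the compact ``error'' $c_2$, so the naive argument fails. The idea that resolves it is to pre-correct using the finitely many lattice shifts $\ell_i$ supplied by the density of $L^\star$, trading an uncontrolled star for one trapped in $w_0+V_0\subseteq W$ at the cost of a bounded, hence harmless, displacement in $\RR^d$. Part (a) is routine by comparison, and the emphasis the authors place on $W$ having non-empty interior is explained precisely by its role in producing the neighbourhood $V_0$ here.
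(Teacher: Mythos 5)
Your proof is correct, and both halves are complete. One thing to note: the paper itself gives no proof of this proposition at all; it is quoted with citations to \cite[Prop.~2.6]{MOO} and \cite[Prop.~7.5]{TAO}, so the relevant comparison is with those standard arguments. Your part (a) is exactly the classical one: the difference of two points of $\oplam(W)$ is a point of $\cL$ whose internal coordinate lies in the precompact set $W-W$, so discreteness of $\cL$ together with injectivity of $\pi^{\RR^d}|_{\cL}$ (which rules out nonzero lattice points with vanishing first coordinate) gives a uniform minimal separation. Your part (b) is a mild repackaging of the usual proof: the cited references take a symmetric open $V_0$ with $w_0+V_0\subseteq W$, note that density of $L^\star$ gives $\cL+(\RR^d\times V_0)=\RR^d\times H$, and then use compactness of the quotient $(\RR^d\times H)/\cL$ to shrink $\RR^d$ to a single ball, obtaining $\cL+\bigl(B_R(0)\times V_0\bigr)=\RR^d\times H$, from which relative density follows by decomposing $(x,w_0)$. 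You instead keep a product-shaped compact set $C_1\times C_2$ with $\cL+(C_1\times C_2)=\RR^d\times H$ and pre-correct the internal coordinate by finitely many lattice translates $\ell_i$ coming from a finite subcover of $C_2$ by the sets $\ell_i^\star+V_0$; this is the same interaction of density of $L^\star$ with co-compactness of $\cL$, organized as a finite correction set rather than a single covering statement, and it yields the same explicit bound on the displacement in $\RR^d$. Either way the argument is sound, and your closing observation that a model set satisfies both hypotheses, hence is Delone, is exactly right.
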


\medskip

Next, we review the concept of Meyer set. We start by recalling the following theorem.

\begin{theorem}\cite{LAG1,Meyer,MOO}\label{char mey} Let $\Lambda \subseteq \RR^d$ be relatively dense. Then, the following are equivalent.
\begin{itemize}
    \item[(i)] $\Lambda$ is a subset of a model set.
    \item[(ii)] There exists a fully Euclidean model set $\oplam(W)$ and a finite set $F$ such that
\[
\Lambda \subseteq \oplam(W)+F\,.
\]
\item[(iii)] $\Lambda -\Lambda$ is uniformly discrete.
\item[(iv)] $\Lambda$ is locally finite and there exists a finite set $F_1$ such that
\[
\Lambda - \Lambda \subseteq \Lambda +F_1 \,.
\]
\end{itemize}
\end{theorem}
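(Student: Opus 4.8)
The plan is to prove the cycle (ii)$\Rightarrow$(i)$\Rightarrow$(iii)$\Rightarrow$(ii), together with (i)$\Rightarrow$(iv)$\Rightarrow$(iii), which closes all four equivalences. Three of these implications are soft and rest only on difference-set bookkeeping for $\oplam$ and on Proposition~\ref{prop2.6}; the genuine content lies in the two steps that must manufacture a cut and project scheme out of a purely metric hypothesis, namely (iii)$\Rightarrow$(ii) (Meyer's theorem) and (iv)$\Rightarrow$(iii) (Lagarias' argument), and these I expect to be the main obstacle.

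For the soft directions I would argue as follows. For (ii)$\Rightarrow$(i) one absorbs the finite translation set: a finite union $\oplam(W)+F=\bigcup_{f\in F}(\oplam(W)+f)$ of translates of a model set is contained in a single model set obtained by enlarging the internal space so that the vectors of $F$ are realised by the $\star$-map, a standard construction (see \cite{MOO}); hence $\Lambda$ is a subset of a model set. For (i)$\Rightarrow$(iii), if $\Lambda\subseteq\oplam(W)$ in a CPS $(\RR^d,H,\cL)$ with $W$ precompact, then since $x,y\in\oplam(W)$ forces $x-y\in L$ and $(x-y)^\star=x^\star-y^\star\in W-W$ we get
\[
\Lambda-\Lambda\subseteq\oplam(W)-\oplam(W)\subseteq\oplam(W-W),
\]
and as $W-W$ is precompact Proposition~\ref{prop2.6}(a) makes $\oplam(W-W)$, and therefore $\Lambda-\Lambda$, uniformly discrete. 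For (i)$\Rightarrow$(iv) the same inclusion applied twice gives $\Lambda-\Lambda-\Lambda\subseteq\oplam(W-W-W)$, which is uniformly discrete, hence locally finite; if $R$ is a relative-density radius for $\Lambda$ then $F_1:=(\Lambda-\Lambda-\Lambda)\cap\overline{B_R(0)}$ is finite, and for any $d\in\Lambda-\Lambda$ relative density furnishes $\lambda\in\Lambda$ with $d-\lambda\in\overline{B_R(0)}$, whence $d-\lambda\in F_1$ and $d\in\Lambda+F_1$; local finiteness of $\Lambda$ is immediate from (iii), which (i) already yields.

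The hard step (iii)$\Rightarrow$(ii) is Meyer's theorem, and here I would follow the harmonious-set route. Uniform discreteness of $\Lambda-\Lambda$ together with relative density first gives that $\Lambda$ is Delone, since from $\Lambda-\Lambda\subseteq(\Lambda-\Lambda)-(\Lambda-\Lambda)$ one reads off uniform discreteness of $\Lambda$. One then shows $\Lambda$ is harmonious, so that for every $\epsilon>0$ the set of $\epsilon$-almost periods of the characters is relatively dense; dualising this almost-periodicity produces a continuous homomorphism of $\RR^d$ into a compact group together with a finitely generated group carrying $\Lambda$, from which one extracts a fully Euclidean scheme $(\RR^d,\RR^m,\cL)$ and a precompact window $W$ with $\Lambda\subseteq\oplam(W)+F$. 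I would cite \cite{Meyer,MOO} for the detailed construction rather than reproduce it.

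Finally, (iv)$\Rightarrow$(iii) is Lagarias' argument: the containment $\Lambda-\Lambda\subseteq\Lambda+F_1$ can be iterated to bound every higher difference set and to show that the group generated by $\Lambda$ is finitely generated; the associated address map into a high-dimensional torus then exhibits the cut and project structure and forces $\Lambda-\Lambda$ to be uniformly discrete, and for this I would cite \cite{LAG1}. Combining the soft implications with these two theorems closes the cycle and proves the equivalence.
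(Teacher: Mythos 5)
Your decomposition is logically complete and, at bottom, rests on the same sources the paper itself invokes: the paper's entire proof is a citation ((i)$\Leftrightarrow$(iii)$\Leftrightarrow$(iv) to \cite{MOO}, (ii)$\Leftrightarrow$(iv) to \cite[Thm.~IV]{Meyer}), whereas you prove the soft arrows (i)$\Rightarrow$(iii), (i)$\Rightarrow$(iv), (ii)$\Rightarrow$(i) yourself and delegate only the two genuinely hard ones, (iii)$\Rightarrow$(ii) (Meyer's theorem) and (iv)$\Rightarrow$(iii) (Lagarias), to the literature. Your cycle (i)$\Rightarrow$(iii)$\Rightarrow$(ii)$\Rightarrow$(i) together with (i)$\Rightarrow$(iv)$\Rightarrow$(iii) does yield all four equivalences, and the soft arguments are sound: additivity of the $\star$-map gives $\Lambda-\Lambda\subseteq\oplam(W-W)$ and $\Lambda-\Lambda-\Lambda\subseteq\oplam(W-W-W)$, Prop.~\ref{prop2.6}(a) makes these uniformly discrete, and the pigeonhole with $F_1=(\Lambda-\Lambda-\Lambda)\cap\overline{B_R(0)}$ plus relative density correctly produces $\Lambda-\Lambda\subseteq\Lambda+F_1$. (The justification you offer for uniform discreteness of $\Lambda$ itself, via $\Lambda-\Lambda\subseteq(\Lambda-\Lambda)-(\Lambda-\Lambda)$, is garbled; the correct one-liner is that distinct $x,y\in\Lambda$ give a nonzero point $x-y$ of the uniformly discrete set $\Lambda-\Lambda$, which already contains $0$. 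But the fact is trivial either way.) What your version buys over the paper's is that a reader sees exactly which implications are bookkeeping and which carry real content.

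The one place where you underestimate the difficulty is (ii)$\Rightarrow$(i). Absorbing $F$ by ``enlarging the internal space'' is not a two-line matter: the naive enlarged lattice $\cL'=\{(x+\sum_i n_if_i,(x^\star,n)) : x\in L,\ n\in\ZZ^l\}$ inside $\RR^d\times(\RR^m\times\RR^l)$ violates both CPS axioms in general --- its internal projection $L^\star\times\ZZ^l$ is not dense in $\RR^m\times\RR^l$, and the projection to $\RR^d$ fails to be one-to-one whenever $\ZZ F\cap L\neq\{0\}$ (for instance if some $f_i\in L$). One must shrink the internal space to $\RR^m\times\ZZ^l$ and then quotient by the closure of $\cL'\cap\bigl(\{0\}\times(\RR^m\times\ZZ^l)\bigr)$, verifying that the image is still a lattice with injective physical and dense internal projections, and that the image of $\overline{W}\times\{0,e_1,\dots,e_l\}$ is a precompact window with nonempty interior. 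This reduction is standard and available in \cite{MOO}, so your citation does carry the step, but it is a construction of the same flavour as the hard directions rather than mere difference-set bookkeeping; alternatively you could have cited (iii)$\Rightarrow$(i) directly from \cite{MOO} (part of the equivalence the paper itself invokes) and avoided the construction entirely.
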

\begin{proof} The equivalence (i)$\Leftrightarrow$(iii)$ \Leftrightarrow$(iv) can be found in \cite{MOO}, while (ii)$\Leftrightarrow$(iv) is \cite[Thm.~IV]{Meyer} .
\end{proof}

The above theorem gives the concept of a Meyer set. More precisely, we have the following definition.

\begin{definition}
A  relatively dense set $\Lambda \subset \RR^n$ is called \textbf{Meyer set} if it satisfies one (and hence all) equivalent conditions of Thm.~\ref{char mey}.

A relatively dense set $\Lambda$ is called \textbf{fully Euclidean Meyer set} if  there exists some fully Euclidean model set $\oplam(W)$ such that $\Lambda \subseteq \oplam(W).$
\end{definition}

One of the goals of this paper is to characterise fully Euclidean Meyer sets.

\medskip

Next, let us recall the following result from \cite{MOO}.

\begin{lemma} Let $\Lambda \subseteq \RR^d$ be a Meyer set. Then, the group
$\langle \Lambda \rangle$ generated by $\Lambda$ is finitely generated. In particular, $\langle \Lambda \rangle$ is a free $\ZZ$-module of finite rank.
\end{lemma}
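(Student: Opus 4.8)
The plan is to realise $\langle \Lambda \rangle$ as a submodule of a concrete finitely generated free $\ZZ$-module coming from the cut-and-project scheme, and then to invoke the module-theoretic lemmas already established in Subsection~\ref{s2}. The whole argument is short once the right ambient module is identified.

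First I would use Theorem~\ref{char mey} to pass from the defining inclusion of a Meyer set (which by definition is relatively dense and contained in a model set, hence satisfies $(i)$) to the fully Euclidean picture $(ii)$: there is a fully Euclidean CPS $(\RR^d,\RR^m,\cL)$, a window $W\subseteq\RR^m$, and a finite set $F\subseteq\RR^d$ with $\Lambda\subseteq\oplam(W)+F$. Since $\oplam(W)\subseteq L:=\pi^{\RR^d}(\cL)$ directly from the definition of $\oplam(W)$, this yields the key inclusion $\Lambda\subseteq L+F$; note the window itself plays no role beyond producing this inclusion.

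Next I would verify that $L$ is a finitely generated free $\ZZ$-module. Here $\cL$ is a lattice, i.e.\ a discrete co-compact subgroup of $\RR^d\times\RR^m=\RR^{d+m}$, and hence a free $\ZZ$-module of finite rank, since any discrete subgroup of $\RR^{d+m}$ is free of rank at most $d+m$. Because the restriction $\pi^{\RR^d}|_{\cL}$ is injective by the first axiom in Definition~\ref{def:Cps}, it is a $\ZZ$-module isomorphism of $\cL$ onto $L$, so $L$ is free of finite rank as well. From $\Lambda\subseteq L+F$ I then get $\langle\Lambda\rangle\subseteq\langle L\cup F\rangle=:G$, and since $L$ is finitely generated and $F$ is finite, $G$ is a finitely generated subgroup of $(\RR^d,+)$. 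Being a subgroup of $\RR^d$ it is torsion free, so by the recalled fact that a finitely generated torsion-free $\ZZ$-module is free, $G$ is free of finite rank. Finally $\langle\Lambda\rangle$ is a submodule of $G$, so Lemma~\ref{lem:subsmodfree is free} gives that $\langle\Lambda\rangle$ is free of finite rank; in particular it is finitely generated, which establishes both assertions of the lemma.

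The one step that is genuinely substantive, rather than purely formal bookkeeping with inclusions, is the second paragraph: verifying that $L$ is finitely generated and free. This rests on two facts that must be cited or recalled, namely that a discrete subgroup of Euclidean space is free of finite rank, and that the injectivity of $\pi^{\RR^d}|_{\cL}$ built into the CPS transports this freeness from $\cL$ to $L$. Everything after that is a direct application of the inclusion $\Lambda\subseteq L+F$ together with Lemma~\ref{lem:subsmodfree is free} and the torsion-free criterion, so no further calculation is needed.
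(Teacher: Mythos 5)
Your proof is correct, but it takes a genuinely different route from the paper's. The paper's own proof is two lines: it cites \cite[Thm.~9.1]{MOO} for the statement that $\langle \Lambda \rangle$ is finitely generated, and then obtains freeness from torsion-freeness (as a subgroup of $\RR^d$) via \cite[Thm.~12.5]{DF}. You instead manufacture finite generation from the cut-and-project structure: Theorem~\ref{char mey}(ii) gives $\Lambda \subseteq \oplam(W)+F \subseteq L+F$; the lattice $\cL$ is free of finite rank as a discrete subgroup of $\RR^{d+m}$ and is carried isomorphically onto $L$ by the injective projection built into Definition~\ref{def:Cps}; hence $G:=\langle L \cup F\rangle = L + \langle F \rangle$ is a finitely generated, torsion-free, hence free, $\ZZ$-module containing $\langle \Lambda \rangle$, and Lemma~\ref{lem:subsmodfree is free} finishes the argument. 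Both routes are sound, and both lean on one nontrivial citation: the paper on Moody's theorem, yours on Meyer's theorem (which is what powers the implication (i)$\Rightarrow$(ii) in Theorem~\ref{char mey}), plus the classical fact that a discrete subgroup of $\RR^{n}$ is free of rank at most $n$ --- a fact you correctly flag as needing a reference, since the paper as written does not supply one. What your longer argument buys is that it stays entirely within the machinery already quoted in Section~2 rather than importing a further external theorem, and it yields a quantitative by-product the paper's proof does not state: $\rank \langle \Lambda \rangle \leq \rank(G) \leq d+m+|F|$. What the paper's proof buys is brevity.
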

\begin{proof}
By \cite[Thm.~9.1]{MOO}, $\langle \Lambda \rangle$ is finitely generated. It is therefore a finitely generated $\ZZ$-module. Moreover, since $\RR$ is torsion free as a $\ZZ$-module, so is $\langle \Lambda \rangle$. Therefore, $\langle \Lambda \rangle$ is a free $\ZZ$-module by \cite[Thm.~12.5]{DF}.
\end{proof}

This allows us introduce the following definition.

\begin{definition} Let $\Lambda \subseteq \RR^d$ be any Meyer set. We define the \textbf{rank} of $\Lambda$ to be
\[
\rank(\Lambda):= \rank_{\ZZ} (\langle \Lambda \rangle)\,.
\]
\end{definition}

\medskip

We complete the section by  showing that each Meyer set is equivalent by finitely many translates with a fully Euclidean model set.

\begin{lemma}\label{equivft} Let $\Lambda$ be a Meyer set, $F$ finite and $\oplam(W)$ a fully Euclidean model set such that
\[\Lambda \subseteq \oplam(W) +F \,.\]
Then $\Lambda$ and $\oplam(W)$ are equivalent by finite translations.
\end{lemma}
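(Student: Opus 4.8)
The plan is to notice that one of the two required inclusions comes for free from the hypothesis, and to produce the reverse one by hand. Recall that $\Lambda$ and $\oplam(W)$ are equivalent by finite translations precisely when there exist finite sets $F_1,F_2$ with $\Lambda \subseteq \oplam(W)+F_2$ and $\oplam(W) \subseteq \Lambda +F_1$; the assumption already supplies $F_2:=F$. Thus the whole content of the lemma is to build a finite set $F_1$ with $\oplam(W)\subseteq \Lambda+F_1$. The natural candidate is
$$
F_1 := \left( \oplam(W) - \Lambda \right) \cap B_R(0) \,,
$$
where $R$ is a relative-denseness radius for $\Lambda$ (which exists since a Meyer set is relatively dense by definition).

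Next I would check that this candidate works, granting its finiteness. Given any $y \in \oplam(W)$, relative density of $\Lambda$ produces $\lambda \in \Lambda$ with $y-\lambda \in B_R(0)$; then $y-\lambda \in F_1$, so $y \in \lambda + F_1 \subseteq \Lambda + F_1$. Hence $\oplam(W)\subseteq \Lambda+F_1$, and combined with $\Lambda \subseteq \oplam(W)+F$ this gives the desired equivalence. The only genuine work is therefore to establish that $F_1$ is finite.

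The key step, and the main obstacle, is to show that $\oplam(W)-\Lambda$ is locally finite, so that its intersection with the bounded set $B_R(0)$ is automatically finite. Here I would invoke the model-set machinery. Using $\Lambda \subseteq \oplam(W)+F$, every $\lambda \in \Lambda$ is of the form $p+f$ with $p\in\oplam(W)$ and $f\in F$, which gives
$$
\oplam(W) - \Lambda \subseteq \left( \oplam(W) - \oplam(W) \right) - F \,.
$$
The standard computation with the $\star$-map yields $\oplam(W)-\oplam(W)\subseteq \oplam(W-W)$, and since $W$ is precompact so is $W-W$; hence $\oplam(W-W)$ is uniformly discrete by Proposition~\ref{prop2.6}(a), and in particular locally finite. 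A finite union of translates of a locally finite set is again locally finite, so the right-hand side above is locally finite, and therefore so is its subset $\oplam(W)-\Lambda$. Intersecting with $B_R(0)$ then produces the finite set $F_1$, which completes the proof.
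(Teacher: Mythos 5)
Your argument is correct, but it takes a genuinely different route from the paper's. The paper's proof is a two-line reduction to the literature: since $\oplam(W)$ is a Meyer set and $F$ is finite, $\oplam(W)+F$ is again a Meyer set \cite{Meyer,MOO}, and the equivalence by finite translations of the relatively dense subset $\Lambda$ with $\oplam(W)$ is then quoted directly from \cite[Lemma~5.5.1]{NS11}. You instead prove the one missing inclusion by hand: the hypothesis already supplies $\Lambda \subseteq \oplam(W)+F$, and you construct
$$
F_1 := \bigl( \oplam(W)-\Lambda \bigr) \cap B_R(0)\,,
$$
with $R$ a relative-denseness radius for $\Lambda$ (available since Meyer sets are relatively dense by the paper's definition), so that $\oplam(W)\subseteq \Lambda+F_1$ is immediate, while finiteness of $F_1$ follows from the chain
$$
\oplam(W)-\Lambda \subseteq \bigl(\oplam(W)-\oplam(W)\bigr)-F \subseteq \oplam(W-W)-F\,,
$$
precompactness of $W-W$, Proposition~\ref{prop2.6}(a), and the fact that a finite union of translates of a uniformly discrete set is locally finite. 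Each of these steps is sound: additivity of the $\star$-map on $L$ gives $\oplam(W)-\oplam(W)\subseteq\oplam(W-W)$ (note only this inclusion is needed, not equality), and intersecting a locally finite set with a bounded ball yields a finite set. In effect you have reproved, in this special case, the content of the lemma the paper cites. What your route buys is self-containment and transparency: the only input is Prop.~\ref{prop2.6}(a), and the interplay between relative density of $\Lambda$ and uniform discreteness of $\oplam(W-W)$ is made explicit. What the paper's route buys is brevity, at the cost of sending the reader to an external reference for the actual mechanism.
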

\begin{proof}

Since $\oplam(W)$ is a Meyer set and $F$ is finite, $\oplam(W) +F$ is also a Meyer set \cite{Meyer,MOO}. The claim follows now from \cite[Lemma~5.5.1]{NS11}.
\end{proof}

\section{Higher dimensional arithmetic progressions in Meyer sets}\label{S3}

In this section we show that Meyer sets contain arithmetic progressions of arbitrary dimensions and length. The proofs will show that the existence of arithmetic progressions of arbitrary length in Meyer sets is interesting only in the case of li-arithmetic progressions. We will study these in the subsequent sections.

We start by recalling the following well-known theorem.

\begin{lemma}[Chinese Remainder Theorem]\cite[Cor.~7.18]{DF}
If $k_1,\dots, k_n$ are pairwise coprime and $a_1,\dots,a_n$   are integers then there exists $x \in \mathbb{Z}^+$ such that
\[
\left\{
\begin{array}{rl}
x &\equiv a_1 \ \pmod{k_1} \\
x &\equiv a_2 \ \pmod{k_2} \\
&  \hspace{0.95cm} \vdots   \\
x & \equiv a_n \ \pmod{k_n}\,.
\end{array}
\right.
\]
Moreover, any two solutions are congruent modulo $k_1k_2\dots.k_n$.
\end{lemma}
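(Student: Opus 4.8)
The plan is to prove existence and uniqueness separately, reducing everything to the coprime two-modulus case by induction on $n$. The only external tool needed is B\'ezout's identity, so the argument is entirely elementary.

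For existence, I would first treat $n=2$. Since $\gcd(k_1,k_2)=1$, B\'ezout's identity provides integers $u,v$ with $uk_1+vk_2=1$. Setting $x=a_2uk_1+a_1vk_2$ and reducing modulo $k_1$ (where $vk_2\equiv 1$) and modulo $k_2$ (where $uk_1\equiv 1$) shows at once that $x\equiv a_1\pmod{k_1}$ and $x\equiv a_2\pmod{k_2}$. For the inductive step, I would use the induction hypothesis to combine the first $n-1$ congruences into a single congruence modulo $K'=k_1\cdots k_{n-1}$. The key observation is that $K'$ is coprime to $k_n$: since each $k_j$ with $j<n$ is coprime to $k_n$, so is their product, because coprimality is multiplicative in this sense. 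Applying the $n=2$ case to the pair of congruences, one modulo $K'$ and one modulo $k_n$, then yields a simultaneous solution of all $n$ congruences.

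An alternative, non-inductive route gives a closed form. With $K=k_1\cdots k_n$ and $K_i=K/k_i$, pairwise coprimality makes each $K_i$ coprime to $k_i$, so there is an inverse $M_i$ with $M_iK_i\equiv 1\pmod{k_i}$. Since $K_i\equiv 0 \pmod{k_j}$ whenever $j\neq i$, the element $x=\sum_{i=1}^{n} a_iM_iK_i$ satisfies $x\equiv a_iM_iK_i\equiv a_i\pmod{k_i}$ for every $i$, solving the system. For uniqueness, if $x$ and $x'$ both solve it then $k_i\mid(x-x')$ for every $i$; pairwise coprimality forces $K=k_1\cdots k_n$ to divide $x-x'$, i.e. $x\equiv x'\pmod{K}$. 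Finally, to guarantee a \emph{positive} solution one simply replaces any integer solution $x$ by $x+tK$ for $t$ large enough.

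The main care point is the implication that \emph{$k_i\mid m$ for all $i$ together with pairwise coprimality gives $K\mid m$}, used in both existence (to get $K'$ coprime to $k_n$) and uniqueness. This is not quite immediate: it follows by induction from the elementary fact that $\gcd(a,b)=1$ with $a\mid m$ and $b\mid m$ implies $ab\mid m$, which is itself a one-line consequence of B\'ezout. Everything else in the argument is routine.
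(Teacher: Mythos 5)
The paper offers no proof of this lemma at all: it is quoted verbatim from the cited reference \cite[Cor.~7.18]{DF}, so there is no in-paper argument to compare against. Your proof is correct and complete, and it is essentially the standard textbook argument: the inductive reduction to the two-modulus case via B\'ezout, the alternative closed form $x=\sum_{i=1}^{n}a_iM_iK_i$, the uniqueness statement modulo $k_1\cdots k_n$, and the shift $x\mapsto x+tK$ to land in $\ZZ^+$ are all exactly what one finds in the cited source; you also correctly flag the one divisibility fact that does real work, namely that pairwise coprimality upgrades ``$k_i\mid m$ for all $i$'' to ``$k_1\cdots k_n\mid m$'', which is needed both to make $K'$ coprime to $k_n$ in the induction and to get uniqueness.
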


As an immediate consequence we get the following result.

\begin{corollary}\label{cor:higher dim in 1d}
For each $n,N \in \mathbb{N}$ there exists $m_1,\dots,m_n \in \mathbb{N}$ such that $\sum^{n}_{i=1}c_im_i$ are distinct for all $0 \leq c_i \leq N$.
\end{corollary}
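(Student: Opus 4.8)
The plan is to construct the $m_i$ exactly as the placement of this corollary suggests, using the Chinese Remainder Theorem to build residue "indicators" modulo a family of pairwise coprime moduli. First I would fix $n$ pairwise coprime integers $k_1,\dots,k_n$, each strictly greater than $N$; for concreteness one may take $n$ distinct primes exceeding $N$, which are automatically pairwise coprime. The requirement $k_j > N$ is the only genuinely needed inequality, since it is what will distinguish residues coming from coefficients in $[N]$.

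Next, for each $1 \leq i \leq n$ I would apply the Chinese Remainder Theorem to the system demanding $m_i \equiv 1 \pmod{k_i}$ and $m_i \equiv 0 \pmod{k_j}$ for every $j \neq i$; invoking the positivity clause of the theorem yields a positive integer $m_i \in \NN$. Now for any coefficients $0 \leq c_i \leq N$, reducing $\sum_{i=1}^n c_i m_i$ modulo a fixed $k_j$ annihilates every term except the $j$-th and leaves $\sum_{i=1}^n c_i m_i \equiv c_j \pmod{k_j}$. Consequently, if two tuples $(c_1,\dots,c_n)$ and $(c_1',\dots,c_n')$ produce the same sum, then $c_j \equiv c_j' \pmod{k_j}$ for every $j$; since both $c_j$ and $c_j'$ lie in $\{0,1,\dots,N\}$ while $k_j > N$, this congruence forces $c_j = c_j'$. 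Hence the sums are pairwise distinct, which is precisely the assertion.

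I do not anticipate any real obstacle: the argument is a direct unwinding of the Chinese Remainder Theorem, and the only points needing attention are selecting the moduli strictly larger than $N$ and using positivity to place each $m_i$ in $\NN$. I would add the remark that one can avoid the Chinese Remainder Theorem altogether by setting $m_i = (N+1)^{i-1}$, in which case $\sum_{i=1}^n c_i m_i$ is literally the base-$(N+1)$ expansion of an integer and is therefore injective on $[N]^n$; the CRT route is preferable here only because it reuses the machinery just recalled and generalizes more transparently.
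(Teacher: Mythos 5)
Your proof is correct and follows essentially the same route as the paper: distinct primes exceeding $N$, Chinese Remainder Theorem indicators $m_i \equiv 1 \pmod{p_i}$, $m_i \equiv 0 \pmod{p_j}$ for $j \neq i$, and comparison of residues to force $c_j = c_j'$. Your closing remark that $m_i = (N+1)^{i-1}$ gives the same conclusion via base-$(N+1)$ expansions is a valid (and simpler) alternative the paper does not mention, but the core argument is identical.
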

\begin{proof}
Let $p_1,\dots,p_n$ be distinct primes such that for all $1 \leq i \leq n$ we have $p_i > N$. By the Chinese Remainder Theorem, there exists $m_1,\dots,m_n$ such that for each $1 \leq i \leq n$ we have
\[
\left\{
\begin{array}{rl}
 m_i &\equiv 1 \ \pmod{p_i} \\
 m_i &\equiv 0 \ \pmod{p_j} \quad \forall j \neq i \,.
\end{array}
\right.
\]
Now if $ \sum^{n}_{i=1}c_im_i =\sum^{n}_{i=1}c'_im_i $
then, for all $1 \leq k \leq n$, we have
\[
\sum^{n}_{i=1}c_im_i \equiv  \sum^{n}_{i=1}c'_im_i \   \pmod{p_k} \,.
\]
and hence $c^{}_k \equiv c'_{k} \pmod{p^{}_{k}}$. Since \[0 \leq c_{k}, c'_{k} \leq N < p_k \,,\]
we get $c_k=c'_k$ for all $1 \leq k \leq n$.\\
\end{proof}
We can now prove the following result.

\begin{proposition}\label{prop1}
Let $n,N \in \mathbb{N}$ and let $\Lambda \in \RR^d$ be a Meyer set. Then there exists some $R >0$ such that $\Lambda \cap B_R(x)$ contains a non-trivial and proper $n$-dimensional arithmetic progression of length $N$ for all $x \in \mathbb{R}^d$.
\end{proposition}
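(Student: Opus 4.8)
The plan is to reduce the statement to the one-dimensional case via the Chinese Remainder Theorem. The key observation is that a single genuine one-dimensional arithmetic progression of sufficient length already contains many higher-dimensional progressions, obtained by choosing the ratios to be suitable integer multiples of a common vector; the price is that these ratios are $\ZZ$-linearly dependent, which is exactly why the proposition is the ``easy'' (and in this sense trivial) existence result.

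First I would invoke Corollary~\ref{cor:higher dim in 1d} to fix integers $m_1,\dots,m_n \in \NN$ such that the sums $\sum_{i=1}^n c_i m_i$ are pairwise distinct as $(c_1,\dots,c_n)$ ranges over $[N]^n$. Setting $M := N(m_1+\dots+m_n)$, every such sum lies in $[M]$, since it is a non-negative integer bounded above by its value at $c_1=\dots=c_n=N$. Next I would appeal to the one-dimensional result of \cite{KST}: for the Meyer set $\Lambda$ and the length $M$, there exists some $R>0$ such that for every $x \in \RR^d$ the set $\Lambda \cap B_R(x)$ contains a non-degenerate one-dimensional arithmetic progression $\{ s+cr : c \in [M] \}$ with $r \neq 0$.

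Given such a progression, I would set $r_i := m_i r$ for $1 \leq i \leq n$ and consider the $n$-dimensional arithmetic progression
\[
A = \{ s+c_1 r_1+\dots+c_n r_n : (c_1,\dots,c_n)\in [N]^n \}
= \left\{ s + \left( \sum_{i=1}^n c_i m_i \right) r : (c_1,\dots,c_n)\in [N]^n \right\} .
\]
Since $0 \leq \sum_i c_i m_i \leq M$, every element of $A$ equals $s+cr$ for some $c \in [M]$, hence belongs to the one-dimensional progression and therefore to $\Lambda \cap B_R(x)$. The distinctness of the sums $\sum_i c_i m_i$ supplied by Corollary~\ref{cor:higher dim in 1d} forces the $(N+1)^n$ elements of $A$ to be pairwise distinct, so $A$ is proper, while $r \neq 0$ makes it non-trivial. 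This would complete the argument.

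I do not expect a serious obstacle here: the proposition amounts to packaging the one-dimensional theorem of \cite{KST} with an elementary number-theoretic device. The only point requiring mild care is to confirm that the progression provided by \cite{KST} is non-degenerate, i.e. that $r \neq 0$, so that the resulting higher-dimensional progression is genuinely non-trivial. Note finally that all ratios $r_i = m_i r$ are scalar multiples of a single vector, so $A$ has rank one and its ratios are $\ZZ$-linearly dependent; this is precisely what makes the construction uninteresting and motivates the linear-independence condition studied in the subsequent sections.
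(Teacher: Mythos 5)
Your proof is correct and follows essentially the same route as the paper: both use Corollary~\ref{cor:higher dim in 1d} (the Chinese Remainder Theorem device) to produce the integers $m_1,\dots,m_n$, invoke the one-dimensional result of \cite{KST} for a progression of length $N(m_1+\dots+m_n)$ with non-zero ratio $r$, and set $r_i = m_i r$ to obtain a proper rank-one progression. Your explicit attention to $r \neq 0$ for properness and non-triviality matches the paper's use of a non-zero ratio from \cite[Lemma 4.3]{KST}.
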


\begin{proof}

Let $n, N \in \mathbb{N}$ be given. We show that, for some $R >0$, there exists \[
s, r_1, r_2, \dots , r_n \in \Lambda
\]
such that
\[
s + \sum^{n}_{i=1}c_ir_i \in \Lambda \cap B_R(x) \qquad \forall  0\leq c_i\leq N \,,
\]
and that the elements $s + \sum^{n}_{i=1}c_ir_i $ are distinct. Let $m_1,\dots, m_n$ be as in Cor.~\ref{cor:higher dim in 1d} and set $N'= Nm_1+\dots+Nm_n$. By \cite[Lemma 4.3]{KST} there exists an $R>0$ and some non-zero $s, r \in \Lambda$ such that
\[
s, s+r, \dots , s+N'r \in \Lambda \cap B_R(x)\,.
\]
Now define $r_j:= m_jr$. It follows that, for $0 \leq \sum^{n}_{i=1}c_im_i \leq N'$,
\[
 s + \sum^{n}_{i=1}c_ir_i = s+ \sum^{n}_{i=1}c_im_ir  \in \Lambda \cap B_R(x)\,.
\]
Moreover, by Cor.~\ref{cor:higher dim in 1d} the progression is proper.
\end{proof}

Note here in passing that, by construction the arithmetic progression in Prop.~\ref{prop1} has rank 1.

\smallskip

It becomes natural to ask if one can construct arithmetic progressions of higher rank. It is easy to see that one can focus on li-arithmetic progressions. Indeed, exactly as in Prop.~\ref{prop1} one can prove the following result.

\begin{lemma}\label{lem1} Let $\Lambda \subseteq \RR^d$ be any Meyer set and $k \in \NN$.
Then, $\Lambda$ contains li-arithmetic progressions of rank $k$ and arbitrary length if and only if for each $d \geq k$, $\Lambda$ contains arithmetic progressions of rank $k$, dimension $d$, and arbitrary length.
\end{lemma}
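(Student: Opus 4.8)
The plan is to prove the two implications separately, with essentially all the content concentrated in the forward direction, which will mirror the construction in Prop.~\ref{prop1}.

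For the direction ($\Leftarrow$) I would simply specialize the hypothesis to $d=k$. An arithmetic progression of dimension $k$ and rank $k$ has exactly $k$ ratios whose $\ZZ$-span has rank $k$; since a module generated by $k$ elements has rank $k$ precisely when those generators are linearly independent, such a progression is automatically an li-arithmetic progression of rank $k$. Thus the existence of rank-$k$, dimension-$k$ progressions of arbitrary length is literally the statement that $\Lambda$ contains li-arithmetic progressions of rank $k$ and arbitrary length, and this direction requires nothing further.

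For the direction ($\Rightarrow$), fix $d \geq k$ and a target length $N$. The idea is to take an li-arithmetic progression with linearly independent ratios $r_1,\dots,r_k$ and distribute the $d$ desired ratios among these $k$ independent directions, applying the Chinese Remainder Theorem inside each direction exactly as in Prop.~\ref{prop1}. Concretely, I would fix integers $n_1,\dots,n_k$ with $n_i \geq 1$ and $\sum_i n_i = d$ (possible since $d \geq k$), and for each $i$ apply Cor.~\ref{cor:higher dim in 1d} to obtain multipliers $m_{i,1},\dots,m_{i,n_i} \in \NN$ for which the sums $\sum_j c_{i,j} m_{i,j}$ are pairwise distinct whenever $0 \leq c_{i,j} \leq N$. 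Setting $M := \max_i \sum_{j} N m_{i,j}$, the hypothesis supplies an li-arithmetic progression $s + \sum_{i=1}^k a_i r_i \in \Lambda$ (for $0 \leq a_i \leq M$) of rank $k$ and length $M$. I would then declare the $d$ ratios $\rho_{i,j} := m_{i,j} r_i$ and form
\[
s + \sum_{i=1}^{k}\sum_{j=1}^{n_i} c_{i,j}\rho_{i,j} = s + \sum_{i=1}^{k}\Big(\sum_{j=1}^{n_i} c_{i,j} m_{i,j}\Big) r_i,
\]
which lies in $\Lambda$ for all $0 \leq c_{i,j} \leq N$ by the choice of $M$.

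It then remains to verify the three required features. Properness follows in two steps: if two tuples $(c_{i,j})$ and $(c'_{i,j})$ yield the same point, then linear independence of $r_1,\dots,r_k$ over $\ZZ$ forces $\sum_j c_{i,j} m_{i,j} = \sum_j c'_{i,j} m_{i,j}$ for each $i$, and the distinctness property of Cor.~\ref{cor:higher dim in 1d} then forces $c_{i,j}=c'_{i,j}$ for all $i,j$. For the rank, the $\ZZ$-module generated by the $\rho_{i,j}$ is a submodule of the rank-$k$ module $\langle r_1,\dots,r_k\rangle$, so its rank is at most $k$; and since $\rho_{1,1},\dots,\rho_{k,1}$ are nonzero scalar multiples of the independent $r_1,\dots,r_k$ they are themselves independent, giving rank exactly $k$. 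Non-triviality is immediate. The only point demanding care — and the closest thing to an obstacle — is the bookkeeping that keeps the rank pinned at exactly $k$ while the dimension grows to $d$: one must guarantee that every independent direction $r_i$ is genuinely represented (whence $n_i \geq 1$) and that the surplus ratios are honest multiples within those same directions, so that no new independent direction is introduced. With the allocation above this is automatic, and the remainder is a direct transcription of Prop.~\ref{prop1}.
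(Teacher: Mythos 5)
Your proof is correct, and it is precisely the argument the paper intends: the paper skips the proof of this lemma, stating only that it is ``similar to the one of Prop.~\ref{prop1}'', and your construction --- distributing the $d$ ratio slots among the $k$ independent directions and applying Cor.~\ref{cor:higher dim in 1d} within each direction to keep the progression proper and the rank pinned at $k$ --- is exactly that generalization of the Chinese Remainder Theorem trick, with the backward direction correctly reduced to the observation that a dimension-$k$, rank-$k$ progression is automatically an li-progression.
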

Since the proof is similar to the one of Prop.~\ref{prop1}, we skip it.

\bigskip

\section{Higher dimensional arithmetic progressions with linearly independent ratios }\label{S4}

In this section we discuss the existence of li-arithmetic progressions in fully Euclidean model sets. We show that the maximal rank of any such progression is the rank of the lattice in the CPS, and that for this rank, we can find li-arithmetic progressions of arbitrary length.

\smallskip

We start by proving the following result (compare \cite[Prop.~2.6]{MOO}).

\begin{lemma}\label{lem:model set generates L}  Let $(\RR^d, \RR^m, \cL)$ be a fully Euclidean CPS, and let $W \subseteq \RR^m$ be any set with non-empty interior. Then $\oplam(W)$ generates $L=\pi_{\RR^d}(\cL)$. In particular,
\[
\rank(\oplam(W)) =d+m \,.
\]
In particular, there exists vectors $r_1,\dots,r_{d+m} \in \oplam(W)$ which are linearly independent over $\ZZ$.
\end{lemma}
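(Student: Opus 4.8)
The plan is to prove the generation statement $\langle \oplam(W)\rangle = L$ first, since the two ``in particular'' assertions follow immediately from it together with the $\ZZ$-module results of Section~\ref{s2}. Throughout I use that $L^\star=\pi^{\RR^m}(\cL)$ is dense in $\RR^m$, which is part of the CPS axioms.

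First I would reduce to a small centred window. Since $W$ has non-empty interior, fix $w_0\in\RR^m$ and $r>0$ with $B_r(w_0)\subseteq W$, and set $\epsilon=r$. I claim $\oplam(B_\epsilon(0))=\{z\in L:\|z^\star\|<\epsilon\}\subseteq\langle\oplam(W)\rangle$. Indeed, given such a $z$, the two balls $B_r(w_0)$ and $B_r(w_0-z^\star)$ have centres at distance $\|z^\star\|<r<2r$, so their intersection is a non-empty open set; by density of $L^\star$ there is $y\in L$ with $y^\star\in B_r(w_0)\cap B_r(w_0-z^\star)$. Then $y^\star\in W$ and $y^\star+z^\star\in B_r(w_0)\subseteq W$, so $y,\,y+z\in\oplam(W)$ and $z=(y+z)-y\in\langle\oplam(W)\rangle$.

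The crux, and the step I expect to be the main obstacle, is to show $\langle\oplam(B_\epsilon(0))\rangle=L$, i.e. that every $x\in L$ is a finite sum of elements of $L$ with star-norm $<\epsilon$. I would argue by descent on $\|x^\star\|$. If $\|x^\star\|<\epsilon$ then $x\in\oplam(B_\epsilon(0))$ and we are done. If $\|x^\star\|\ge\epsilon$, use density of $L^\star$ to pick $y\in L$ with $y^\star$ within $\epsilon/4$ of the point $(\epsilon/2)\,x^\star/\|x^\star\|$; then $\|y^\star\|<\tfrac34\epsilon<\epsilon$, so $y\in\oplam(B_\epsilon(0))$, while, because that target point lies on the segment from $0$ to $x^\star$, one gets $\|x^\star-y^\star\|\le\|x^\star\|-\epsilon/2+\epsilon/4=\|x^\star\|-\epsilon/4$. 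Replacing $x$ by $x-y\in L$ strictly decreases the star-norm by at least $\epsilon/4$, so after finitely many steps the remainder lies in $\oplam(B_\epsilon(0))$, exhibiting $x$ as a sum of elements of $\oplam(B_\epsilon(0))$. (Equivalently, this says the lattice points of $\cL$ in the slab $\RR^d\times B_\epsilon(0)$ generate $\cL$.) The delicate point to verify carefully is precisely that density of $L^\star$ lets one simultaneously keep $y^\star$ inside the ball of radius $\epsilon$ and strictly reduce the norm at each step. Combining this with the previous paragraph gives $L=\langle\oplam(B_\epsilon(0))\rangle\subseteq\langle\oplam(W)\rangle\subseteq L$, so $\oplam(W)$ generates $L$.

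Finally I would read off the remaining assertions. The restriction $\pi^{\RR^d}|_{\cL}\colon\cL\to L$ is a group isomorphism (injective by the CPS axioms, surjective onto $L$ by definition of $L$), and $\cL$ is a lattice in $\RR^d\times\RR^m=\RR^{d+m}$, hence a free $\ZZ$-module of rank $d+m$. Therefore $\langle\oplam(W)\rangle=L\cong\cL$ has rank $d+m$, which gives $\rank(\oplam(W))=d+m$. Since $\oplam(W)$ is a generating set of the free $\ZZ$-module $L$ of rank $d+m$, Lemma~\ref{lem:12} produces $d+m$ elements $r_1,\dots,r_{d+m}\in\oplam(W)$ that are linearly independent over $\ZZ$, completing the proof.
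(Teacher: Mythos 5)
Your proof is correct, but the crux is carried out by a genuinely different mechanism than the paper's. Both arguments share the same skeleton: pass from $W$ to a window containing $0$ in its interior by taking differences of points of $\oplam(W)$, use density of $L^\star$ in $\RR^m$ to decompose an arbitrary $x \in L$ into finitely many pieces with small star image, and finish exactly as you do, via the isomorphism $L \cong \cL$ and Lemma~\ref{lem:12}. The difference is in the decomposition step. The paper uses a scaling trick: choose $n$ with $\|x^\star\| < nr$, apply density \emph{once} to find $z \in L$ with $z^\star \in B_r(0) \cap B_{r/n}(x^\star/n)$, and then $x = nz + (x-nz)$ exhibits $x$ as a $\ZZ$-combination of just two elements with star image in $B_r(0)$. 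You instead run a greedy descent, subtracting one element $y$ with $\|y^\star\|<\epsilon$ at a time, with $y^\star$ chosen near $(\epsilon/2)\,x^\star/\|x^\star\|$ so that the star-norm drops by at least $\epsilon/4$ per step; this needs on the order of $\|x^\star\|/\epsilon$ applications of density rather than one, but it is equally elementary and your termination argument is airtight. A further point in your favour: the paper's reduction step rests on the bald assertion $\oplam(W)-\oplam(W)=\oplam(W-W)$, whose nontrivial inclusion $\oplam(W-W)\subseteq \oplam(W)-\oplam(W)$ itself requires a density argument (only the reverse inclusion is automatic); your first paragraph --- intersecting $B_r(w_0)$ with $B_r(w_0-z^\star)$ and invoking density of $L^\star$ --- supplies precisely that missing argument, so on this point your write-up is more complete than the paper's own proof.
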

\begin{proof}

It suffices to show that $\oplam(W)-\oplam(W)=\oplam(W-W)$ generates $L$. Note that $0$ is an interior point in $W-W$ and hence we can find some $r>0$ such that $B_r(0) \subseteq W-W$. \\

Now, let $x \in L$ be arbitrary. Pick some $n$ such that $d(x,0) < nr$. Then $\frac{x}{n} \in B_r(0)$. Note here that $\frac{x}{n} \in B_r(0) \cap B_{\frac{r}{n}}(\frac{x}{n})$. Since this set is open, by the density of $\pi_{\RR^m}(\cL)$ in $\RR^m$, there exists some $(z,z^\star) \in \cL$ such that $z^\star \in B_r(0) \cap B_{\frac{r}{n}}(\frac{x}{n})$. Then $z \in \oplam(W-W)$ and $d(x^\star, nz^\star)<r$, thus $x^\star-nz^\star \in B_r(0) \subseteq W-W$. Therefore,
\begin{align*}
z & \in \oplam(W)- \oplam(W) \\
x-nz & \in \oplam(W)-\oplam(W)\,.
\end{align*}
This gives  $x \in \langle \oplam(W) \rangle$. The last claim follows now from Lemma~\ref{lem:12}.
\end{proof}

\smallskip
Next, we prove the following generalisation of \cite[Prop~4.2]{KST}.

\begin{proposition} \label{prop:model set AP} Let $(\RR^d,H, \cL)$ be any CPS and let $W \subseteq H$ be any set with non-empty interior. Then, for all $M \in \NN$, we can find open sets $U_M, V_M \subset H$ such that $0 \in V_M$ and
\begin{displaymath}
\oplam(U_M)+\underbrace{\oplam(V_M)+\oplam(V_M)+\dots+\oplam(V_M)}_{M -\mbox{\rm times }} \subseteq  \oplam(W) \,.
\end{displaymath}
\end{proposition}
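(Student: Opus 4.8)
The plan is to split the statement into an algebraic part on the lattice side and a purely topological part inside the internal group $H$. The algebraic input is that the $\star$-map is a group homomorphism, so $(x+y)^\star=x^\star+y^\star$ for all $x,y\in L$. This immediately yields the superadditivity
\[
\oplam(A)+\oplam(B)\subseteq\oplam(A+B)\qquad(A,B\subseteq H)\,,
\]
because if $x^\star\in A$ and $y^\star\in B$ then $x+y\in L$ with $(x+y)^\star\in A+B$. Iterating $M$ times, for any open $U,V\subseteq H$ I obtain
\[
\oplam(U)+\underbrace{\oplam(V)+\dots+\oplam(V)}_{M-\text{times}}\subseteq\oplam\Bigl(U+\underbrace{V+\dots+V}_{M-\text{times}}\Bigr)\,.
\]
Hence it suffices to produce open sets $U_M\subseteq H$ and $V_M\ni 0$ with $U_M+\underbrace{V_M+\dots+V_M}_{M}\subseteq W$, and the conclusion will follow by applying $\oplam$ to this inclusion.

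To find such windows I would use the hypothesis that $W$ has non-empty interior: choose an open set $O\subseteq W$ and a point $w_0\in O$. Since $H$ is a topological group, the $(M{+}1)$-fold addition map
\[
\sigma\colon H^{M+1}\to H\,,\qquad \sigma(u,v_1,\dots,v_M)=u+v_1+\dots+v_M
\]
is continuous, and $\sigma(w_0,0,\dots,0)=w_0\in O$. Therefore $\sigma^{-1}(O)$ is an open neighbourhood of $(w_0,0,\dots,0)$, so by definition of the product topology it contains a basic open box $U\times V_1\times\dots\times V_M$ with $w_0\in U$ and $0\in V_j$ for every $j$. Setting $V:=\bigcap_{j=1}^{M}V_j$, which is open and contains $0$, and taking $U_M:=U$, $V_M:=V$, I get $U_M+V_M+\dots+V_M\subseteq O\subseteq W$, exactly the inclusion needed above. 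Note that $U_M$ is non-empty (it contains $w_0$) and $V_M$ is an open set containing $0$, as required by the statement.

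I expect the only delicate point to be this topological-group step, namely passing from the continuity of addition to a \emph{single} neighbourhood $V_M$ of $0$ that works simultaneously for all $M$ summands, together with checking $0\in V_M$; everything else is homomorphism bookkeeping. In the fully Euclidean case $H=\RR^m$ this step is entirely elementary: if $B_\varepsilon(w_0)\subseteq W$ one may simply take $U_M=B_{\varepsilon/2}(w_0)$ and $V_M=B_{\varepsilon/(2M)}(0)$, since then any sum of one element of $U_M$ and $M$ elements of $V_M$ stays within distance $\varepsilon$ of $w_0$. The general LCAG argument via continuity of $\sigma$ is the clean way to cover arbitrary $H$ at once.
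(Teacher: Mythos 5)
Your proposal is correct and follows essentially the same route as the paper: the paper's proof likewise first produces open sets $U_M$ and $V_M \ni 0$ with $U_M + V_M + \dots + V_M \subseteq W$ (asserted tersely, via the same continuity-of-addition fact you spell out) and then applies the inclusion $\oplam(U_M)+\oplam(V_M)+\dots+\oplam(V_M) \subseteq \oplam(U_M+V_M+\dots+V_M) \subseteq \oplam(W)$. You have merely filled in the two details the paper leaves implicit, namely the topological-group argument for the windows and the homomorphism property of the $\star$-map behind the superadditivity of $\oplam$.
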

\begin{proof}
As $W$ has non-empty interior, there exists non-empty open sets $U_M \subseteq H$ and $0 \in V_M \subseteq H$ such that
\[
U_M + \underbrace{V_M+V_M+\dots+V_M}_{M \mbox{-times }}\subseteq W\,.
\]
Then,
\[
\oplam(U_M)+\underbrace{\oplam(V_M)+\oplam(V_M)+\dots+\oplam(V_M)}_{M\mbox{-times }}
 \subseteq \oplam(U_M + \underbrace{V_M+V_M+\dots+V_M}_{M\mbox{-times}}) \subseteq \oplam(W)\,.
\]
This completes the proof.
\end{proof}

By combining Prop.~\ref{prop:model set AP} with Cor.~\ref{cor1} we get the following result.

\begin{theorem}\label{thm:ap rank model}
\label{theo:d+m AP}
Let $\oplam(W)$ be a model set in a fully Euclidean CPS $(\RR^d, \RR^m, \cL)$. Then,
\begin{itemize}
\item[(a)] Any arithmetic progression of length $N$ in $\oplam(W)$ has rank at most $d+m$.
\item[(b)] For each $N$ there exists some $R>0$ such that the set $\oplam(W) \cap B_{R}(y)$ contains an li-arithmetic progression of length $N$ and rank $d+m$ for all $y \in \RR^d$.
\end{itemize}
\end{theorem}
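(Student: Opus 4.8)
The plan is to treat the two parts separately: part (a) is a short module-theoretic observation, while part (b) is the substantive construction.

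For part (a), I would first observe that every arithmetic progression contained in $\oplam(W)$ has all of its ratios inside $L=\pi^{\RR^d}(\cL)$. Indeed, if $\{s+\sum_j c_j r_j\}\subseteq \oplam(W)\subseteq L$, then setting $c_j=1$ and the other coefficients to $0$ shows $s+r_j\in L$, and since $s\in L$ and $L$ is a group, $r_j=(s+r_j)-s\in L$. Because $\pi^{\RR^d}|_{\cL}$ is injective, $L\cong\cL\cong\ZZ^{d+m}$ is a free $\ZZ$-module of rank $d+m$. By definition the rank of the progression is the size of a maximal $\ZZ$-linearly independent subset of $\{r_1,\dots,r_n\}\subseteq L$, so Corollary~\ref{cor1} bounds it immediately by $d+m$.

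For part (b), the idea is to extract the $d+m$ ratios from an \emph{inflated} window small enough that the whole progression still lands in $\oplam(W)$. Concretely, I would set $M:=N(d+m)$ and apply Prop.~\ref{prop:model set AP} to obtain open sets $U_M,V_M\subseteq\RR^m$ with $0\in V_M$ and
\[
\oplam(U_M)+\underbrace{\oplam(V_M)+\dots+\oplam(V_M)}_{M\text{-times}}\subseteq\oplam(W)\,.
\]
Since $V_M$ is open and contains $0$, Lemma~\ref{lem:model set generates L} applies to $V_M$ and produces $r_1,\dots,r_{d+m}\in\oplam(V_M)$ that are linearly independent over $\ZZ$; these are the ratios. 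Note also that $0\in\oplam(V_M)$, since $0\in L$ and $0^\star=0\in V_M$.

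The key containment is then the following. For any coefficients $0\le c_j\le N$ we have $\sum_j c_j\le N(d+m)=M$, so $\sum_{j=1}^{d+m}c_j r_j$ can be written as a sum of exactly $M$ elements of $\oplam(V_M)$: list each $r_j$ with multiplicity $c_j$ and pad the remaining $M-\sum_j c_j$ summands with $0\in\oplam(V_M)$. Hence $\sum_j c_j r_j$ lies in the $M$-fold Minkowski sum above. Finally, $U_M$ has non-empty interior, so $\oplam(U_M)$ is relatively dense by Prop.~\ref{prop2.6}(b); I would fix $R_0$ with $B_{R_0}(y)\cap\oplam(U_M)\neq\emptyset$ for every $y$, choose $s$ in this intersection, and set $R:=R_0+N\sum_{j}\|r_j\|$. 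Then each $s+\sum_j c_j r_j$ lies in $\oplam(W)\cap B_R(y)$, the ratios are linearly independent, and the progression has dimension, and hence rank, equal to $d+m$ and length $N$, as required.

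I expect the main obstacle to be the bookkeeping in part (b): one must take the independent ratios from the inner window $V_M$ rather than from $W$ itself, and the inflation factor $M=N(d+m)$ together with the padding by $0\in\oplam(V_M)$ is precisely what keeps the entire progression inside $\oplam(W)$. The non-empty interior hypothesis on $W$ enters twice — through Lemma~\ref{lem:model set generates L} to obtain linearly independent ratios, and through Prop.~\ref{prop2.6}(b) to make the base point $s$ available in every ball with a uniform radius — which is exactly why it cannot be dropped.
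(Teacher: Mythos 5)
Your proposal is correct and follows essentially the same route as the paper: part (a) reduces to Corollary~\ref{cor1} applied inside the free module $L\cong\cL$ of rank $d+m$ (you spell out why the ratios lie in $L$, which the paper leaves implicit), and part (b) uses $M=N(d+m)$ with Prop.~\ref{prop:model set AP}, linearly independent ratios drawn from $\oplam(V_M)$, padding by $0\in\oplam(V_M)$, and relative density of $\oplam(U_M)$ to place the base point, exactly as in the paper. The only cosmetic difference is that you invoke Lemma~\ref{lem:model set generates L} for the independent ratios where the paper cites Lemma~\ref{lem:12} directly; the content is the same.
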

\begin{proof}
\textbf{(a)}
For any arithmetic progression of rank $k$ in $\oplam(W)$ the set $\{r_1^{},\dots,r_k^{} \} $, with $r_i^{}$ the ratios of the progression, is $\mathbb{Z}-$linearly independent in $L$. Therefore, by Cor.~\ref{cor1} we have $k \leq m+d$. This proves (a). \\

\textbf{(b)} Let $N$ be given. We show that there exists some $R>0$, such that, for all $y \in \RR^d$ there exists some $s \in \oplam(W)$, and $\mathbb{Z}-$linearly independent $r_1^{},\dots,r_{m+d}^{}$ with $s + \sum_{j=1}^{m+d}c_jr_j \in \oplam(W) \cap B_R(y)$ for all $0 \leq c_j^{} \leq N.$ \\

Set $M:= N\cdot(m+d) \,.$ By Prop.~\ref{prop:model set AP} there exists open sets $U_M, V_M \subseteq \RR^m$ such that $0 \in V_M$ and
\[
\oplam(U_M)+\underbrace{\oplam(V_M)+\oplam(V_M)+\dots+\oplam(V_M)}_{M-\text{times}} \subseteq  \oplam(W)\,. \]
As $U_M^{}$ has non-empty interior, by Prop.~\ref{prop2.6} there exists $R' > 0$ such that $\oplam(U_M) + B_{R'}(0) = \mathbb{R}^d$. Next, by Lemma~\ref{lem:12} there exists $\ZZ$-linearly independent vectors $r_1^{},\dots,r_{m+d}^{} \in \oplam(V_M)$.
Define
\[
R:= N\cdot(\|r_1^{}\|+\dots+\|r_{m+d}^{}\|)+R'\,.
\]

Let $y \in \mathbb{R}^d$ be arbitrary. By the definition of $R'$, there exists some $s \in \oplam(U_M^{}) \cap B_{R'}(y).$ Then, for all $ 0 \leq c_i \leq N$, as $0 \in \oplam(V_M)$, we have
\begin{align*}
s+\sum_{i=1}^{m+d} c_i^{} r_i^{} &=s+ \underbrace{r_1^{}+\dots+r_1^{}}_{c_1^{}\text{times}}+\dots+\underbrace{r_{m+d}^{}+\dots+r_{m+d}^{}}_{c_{m+d}^{}\text{times}}\\
&= s+\underbrace{r_1^{}+\dots+r_1^{}}_{c_1^{}\text{times}}  +\dots + \underbrace{r_{m+d}^{}+\dots+r_{m+d}^{}}_{c_{m+d}^{}\text{times}} \,\,\, + \underbrace{0+\dots+0}_{M-\sum_{i=1}^{m+d}c_i^{}\text{times}}\\
& \in \oplam(U_M)+\underbrace{\oplam(V_M)+\oplam(V_M)+\dots+\oplam(V_M)}_{ c_1 \ \text{times}}+\underbrace{\oplam(V_M)+\oplam(V_M)+\dots+\oplam(V_M)}_{ c_2 \ \text{times}} \\
&+  \dots +\underbrace{\oplam(V_M)+\oplam(V_M)+\dots+\oplam(V_M)}_{ c_{m+d} \ \text{times}} +\underbrace{\oplam(V_M)+\oplam(V_M)+\dots+\oplam(V_M)}_{M-\sum_{i=1}^{m+d}c_i^{} \ \text{times}}   \\
& = \oplam(U_M)+\underbrace{\oplam(V_M)+\oplam(V_M)+\dots+\oplam(V_M)}_{ M \ \text{times}} \subseteq \oplam(W)  \,.
\end{align*}

Moreover, for all $ 0 \leq c_i \leq N$, we have
\begin{align*}
d(s+\sum_{i=1}^{m+d}c_i^{}r_i,y)& = \| y- \left(s+\sum_{i=1}^{m+d}c_i^{}r_i \right) \| \leq \| y- s \| + \| \sum_{i=1}^{m+d}c_i^{}r_i  \| \\
&\leq R' +\sum_{i=1}^{m+d} \| c_i^{}r_i  \| =  R' +\sum_{i=1}^{m+d}c_i^{} \| r_i  \| \leq R' +\sum_{i=1}^{m+d}N \| r_i  \|= R \,.
\end{align*}

This implies that, for each $0 \leq c_i^{} \leq N$,
\[
s+ \sum_{i=1}^{m+d} c_i^{}r_i^{} \in \oplam(W) \cap B_R(y)\,.
\]
\end{proof}

\smallskip

Thm.~\ref{thm:ap rank model} suggests the following definition.

\begin{definition}  Let $\Lambda \subseteq \RR^d$ be a Meyer set.

The \textbf{ap-rank} of $\Lambda$, denoted by $\aprank(\Lambda)$, is the largest $k \in \NN$ with the property that for all $N \in \NN$, there exists an li-arithmetic progression of length $N$ and rank $k$ in $\Lambda$.
\end{definition}

Note here in passing that by Lemma~\ref{lem1}, the ap-rank of $\Lambda$ is the largest positive integer $k$ such that for all $N \in \NN$ the set $\Lambda$ contains an arithmetic progression of length $N$ and rank $k$.

\medskip

The next result tells us that the definition of ap-rank makes sense.

\begin{lemma}\label{ll1} Let $\Lambda \subseteq \RR^d$ be any Meyer set. Then
\[
1 \leq \aprank(\Lambda)  \leq \rank(\Lambda) \,.
\]
\end{lemma}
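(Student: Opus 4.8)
The plan is to prove the two inequalities separately, both by reducing to results already established in the excerpt; neither direction requires any new machinery.

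For the lower bound $\aprank(\Lambda) \geq 1$, the key observation is that a rank-$1$ li-arithmetic progression is nothing more than an ordinary one-dimensional arithmetic progression $s, s+r, \dots, s+Nr$ with a single nonzero ratio $r$, since the singleton $\{r\}$ with $r \neq 0$ is trivially linearly independent over $\ZZ$. I would invoke Prop.~\ref{prop1} in the case $n=1$: for every $N \in \NN$ it produces a proper one-dimensional arithmetic progression of length $N$ inside $\Lambda$, and properness forces $r \neq 0$. Hence for all $N$ there is an li-arithmetic progression of length $N$ and rank $1$ in $\Lambda$, so by definition $\aprank(\Lambda) \geq 1$.

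For the upper bound $\aprank(\Lambda) \leq \rank(\Lambda)$, I would set $k := \aprank(\Lambda)$ and exhibit $k$ linearly independent elements of $\langle \Lambda \rangle$. By definition of ap-rank there is, for some $N \geq 1$, an li-arithmetic progression $\{s + c_1 r_1 + \dots + c_k r_k : 0 \leq c_j \leq N\} \subseteq \Lambda$ with $r_1,\dots,r_k$ linearly independent over $\ZZ$. Taking all $c_j = 0$ gives $s \in \Lambda$, and taking $c_j = 1$ with the remaining coefficients $0$ gives $s + r_j \in \Lambda$; subtracting yields $r_j = (s+r_j) - s \in \Lambda - \Lambda \subseteq \langle \Lambda \rangle$ for each $j$. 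Thus $r_1,\dots,r_k$ are $k$ elements of the finitely generated free $\ZZ$-module $\langle \Lambda \rangle$ that are linearly independent over $\ZZ$, and Cor.~\ref{cor1} applied to $M = \langle \Lambda \rangle$ forces $k \leq \rank(\langle \Lambda \rangle) = \rank(\Lambda)$.

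There is essentially no serious obstacle here: the statement is a bookkeeping consequence of Prop.~\ref{prop1}, the freeness of $\langle \Lambda \rangle$, and Cor.~\ref{cor1}. The only two points requiring a moment of care are, first, that the definition of ap-rank only guarantees the existence of the progression for all $N$, so for the upper bound one is free to fix any convenient $N \geq 1$; and second, that one must pass from the ratios $r_j$ living in $\RR^d$ to their membership in $\langle \Lambda \rangle$, which is exactly what the differences $(s+r_j)-s$ accomplish.
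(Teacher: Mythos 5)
Your proof is correct and follows essentially the same route as the paper: the lower bound comes from the one-dimensional arithmetic progressions of arbitrary length guaranteed by \cite{KST} (via Prop.~\ref{prop1}), and the upper bound comes from observing that the ratios of an li-arithmetic progression are linearly independent elements of $\langle \Lambda \rangle$ and applying Cor.~\ref{cor1}. Your version merely makes explicit the step $r_j=(s+r_j)-s\in\Lambda-\Lambda\subseteq\langle\Lambda\rangle$, which the paper leaves implicit.
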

\begin{proof}
The lower bound follows immediately from \cite{KST}. Note here that any non-trivial 1-dimensional arithmetic progression in $\RR^d$ has rank 1.

Next, consider any arithmetic progression of length $N \geq 2$ and rank $k$ in $\Lambda$ and let $r_1,\dots,r_k$ be the linearly independent ratios. Then
$r_1,\dots,r_k$ are linearly independent vectors in $\langle \Lambda \rangle$. The claim follows from Cor.~\ref{cor1}.
\end{proof}

\begin{remark}
\begin{itemize}
\item [(a)] Thm.~\ref{theo:d+m AP} says that for a fully Euclidean model set $\oplam(W)$ in the CPS $(\RR^d,\RR^m,\cL)$ we have
\[
\rank(\oplam(W)) = \aprank(\oplam(W)) =d+m \,.
\]
\item [(b)] Since the ap-rank does not change under translates, but the rank changes for translates outside the $\ZZ$-module generated by the set, it is easy to construct examples of translates of fully Euclidean model sets such that
\[
\rank(t+\oplam(W)) = \aprank(t+\oplam(W)) +1\,.
\]
In Example~\ref{ex1}, for each $n \in \NN$,  we will construct a Meyer set $\Lambda$ such that
\[
\rank(\Lambda) = \aprank(\Lambda) +n \,.
\]
\item [(c)] If $\Lambda \subseteq \RR^d$ is a Meyer set, we will show in Cor.~\ref{cor3} that
\[
\aprank(\Lambda) \geq d \,.
\]
\end{itemize}
\end{remark}

\medskip

We complete the section by providing a colouring version of Thm.~\ref{theo:d+m AP}.

\begin{theorem} \label{theo:mono model}
 Let $\oplam(W)$ be a model set in a fully Euclidean CPS $(\RR^d, \RR^m, \cL)$. Then, for each $r,k$ there exists some $R$ such that, no matter how we colour $\oplam(W)$ with $r$ colours the set $\oplam(W) \cap B_{R}(y)$ contains a monochromatic li-arithmetic progression of length $k$ and rank $d+m$ for all $y \in \RR^d$.
\end{theorem}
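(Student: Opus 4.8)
The plan is to combine the geometric construction from the proof of Thm.~\ref{theo:d+m AP}(b) with the higher dimensional van der Waerden theorem, Thm.~\ref{theo:vdW}. The idea is that once we fix a collection of $\ZZ$-linearly independent ratios $r_1,\dots,r_{d+m}$ drawn from a suitable auxiliary model set, every grid of integer coefficients of bounded size produces a progression lying inside $\oplam(W)$. A colouring of $\oplam(W)$ then pulls back to a colouring of a box $[L]^{d+m}$ in $\ZZ^{d+m}$, and Thm.~\ref{theo:vdW} supplies the required monochromatic grid, which we push forward to a monochromatic li-arithmetic progression.

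Concretely, let $r,k$ be given and set $L := W(r,k,d+m)$ and $M := (d+m)\cdot L$. By Prop.~\ref{prop:model set AP} there are open sets $U_M, V_M \subseteq \RR^m$ with $0 \in V_M$ and
\[
\oplam(U_M)+\underbrace{\oplam(V_M)+\dots+\oplam(V_M)}_{M\text{-times}} \subseteq \oplam(W)\,.
\]
Since $V_M$ is open and contains $0$, Lemma~\ref{lem:model set generates L} shows $\oplam(V_M)$ generates $L$, which has rank $d+m$; hence by Lemma~\ref{lem:12} we may pick $\ZZ$-linearly independent $r_1,\dots,r_{d+m} \in \oplam(V_M)$. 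As $U_M$ has non-empty interior, Prop.~\ref{prop2.6} gives $R'>0$ with $\oplam(U_M)+B_{R'}(0)=\RR^d$. Finally set
\[
R := R' + L\sum_{j=1}^{d+m}\|r_j\|\,.
\]

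Now fix any $r$-colouring of $\oplam(W)$ and any $y \in \RR^d$, and choose $s \in \oplam(U_M)\cap B_{R'}(y)$. For a coefficient vector $(n_1,\dots,n_{d+m}) \in [L]^{d+m}$ we have $\sum_j n_j \le (d+m)L = M$, so writing $s + \sum_j n_j r_j$ as $s$ plus $\sum_j n_j$ copies of the $r_j$, padded with $M-\sum_j n_j$ copies of $0 \in \oplam(V_M)$, exhibits it as a point of $\oplam(W)$. Thus $(n_1,\dots,n_{d+m}) \mapsto \text{(colour of } s+\sum_j n_j r_j)$ is a well-defined $r$-colouring of $[L]^{d+m}$, which we extend arbitrarily to all of $\ZZ^{d+m}$. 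By Thm.~\ref{theo:vdW} there is a monochromatic grid $[k_1,\dots,k_{d+m};l_1,\dots,l_{d+m};k] \subseteq [L]^{d+m}$ of depth $k$. Setting $s' := s + \sum_j l_j r_j$, its image
\[
\left\{ s' + \sum_{j=1}^{d+m} p_j (k_j r_j) : p_j \in [k] \right\}
\]
is a monochromatic arithmetic progression of length $k$ whose ratios $k_j r_j$ remain $\ZZ$-linearly independent (as each $k_j \ge 1$), hence of rank $d+m$. Since every coordinate of the grid lies in $[L]$, each such point is within $L\sum_j\|r_j\|$ of $s$, and therefore within $R$ of $y$, so the whole progression lies in $\oplam(W)\cap B_R(y)$.

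The only genuinely delicate point is the bookkeeping that ties the combinatorial and geometric pictures together: one must choose $M$ large enough, namely $M=(d+m)L$, so that the \emph{entire} box $[L]^{d+m}$ of coefficients maps into $\oplam(W)$, not merely a single progression of depth $k$. This is forced because van der Waerden locates the grid at an a priori unknown position inside the box and with step sizes $k_j$ that can be as large as $L$. Once this padding argument is in place, the linear independence of the rescaled ratios $k_j r_j$ and the radius estimate are routine, mirroring the corresponding steps in the proof of Thm.~\ref{theo:d+m AP}.
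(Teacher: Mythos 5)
Your proposal is correct and takes essentially the same route as the paper: the paper's proof invokes Thm.~\ref{theo:d+m AP}(b) with $N=W(r,k,d+m)$ to place an li-arithmetic progression of that length and rank $d+m$ inside $\oplam(W)\cap B_R(y)$, pulls the colouring back to $[N]^{d+m}$, applies Thm.~\ref{theo:vdW}, and pushes the monochromatic grid forward, exactly as you do. The only difference is presentational: you unfold the construction behind Thm.~\ref{theo:d+m AP}(b) (the sets $U_M,V_M$, the padding by $0\in\oplam(V_M)$ with $M=(d+m)L$, and the radius estimate) inline rather than citing that theorem as a black box.
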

\begin{proof}
Let $N$ be such that van der Waerden's Theorem (Thm.~\ref{theo:vdW}) holds for $r,k$ applied to $[N]^{d+m}.$ By Thm.~\ref{theo:d+m AP} there exists some $R>0$ such that, for all $y \in \mathbb{R}$, the set
\[
\oplam(W) \cap B_R(y)\,,
\]
contains a non-trivial li-arithmetic progression length $N$. We show that this $R$ works.

Arbitrarily colour \oplam(W) with $r$ colours and let $y \in \mathbb{R}^d$ be given. By Thm.~\ref{theo:d+m AP} there exists $s, r_1^{},\dots,r_{m+d}^{}$ such that $r_1^{},\dots,r_{m+d}^{}$ are $\ZZ$-linearly independent, and for all $0 \leq c_i \leq N$,
\[
 s+\sum_{i=1}^{m+d}c_ir_i^{} \in \oplam(W) \cap B_R(y) \,.
\]
Colour the set $[N]^{m+d}$ by colouring $(c_1,\dots,c_{m+d})$ with the colour of $s+\sum_{i=1}^{m+d}c_ir_i^{}$. By the choice of $N$, there exists a monochromatic grid
\[
[k_1,\dots,k_{d+m};l_1,\dots,l_{d+m};k]\in [N]^{d+m}
\]
of length $k$ and dimension $d+m$. Then, for all $1 \leq m_j \leq k$, the set
\[
s+\sum_{j=1}^{m+d}(l_j+m_jk_j)r_j \in \oplam(W) \cap B_R(y)
\]
is monochromatic. Now set $s':=s+\sum_{j=1}^{m+d} l_jr_j$ and $r'_j:=k_jr_j$. Then, for all
$(m_1,\dots, m_{m+d}) \in [k]^{m+d}$,
\[
s'+\sum_{j=1}^{m+d} m_j r'_j\in \oplam(W) \cap B_R(y)
\]
is a monochromatic li-arithmetic progression of length $k$ and rank $m+d.$

\end{proof}

\section{Ap-rank of Meyer sets}\label{S5}

In this section we calculate the ap-rank of a Meyer set $\Lambda$. We know that $\Lambda$ is equivalent by finite translates to a fully Euclidean model set $\oplam(W)$, and we use this to show that
\[
\aprank(\Lambda)=\aprank(\oplam(W))=\rank(\oplam(W)) \,.
\]

\medskip

We start by proving the following result.

\begin{lemma}\label{lema: equiv same ap} Let $\Lambda, \Gamma \subseteq \RR^d$ be Meyer sets which are equivalent by finite translates. Then
\[
\aprank(\Lambda)= \aprank(\Gamma)
\]
\end{lemma}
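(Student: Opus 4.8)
The plan is to prove the two inequalities $\aprank(\Lambda) \le \aprank(\Gamma)$ and $\aprank(\Gamma) \le \aprank(\Lambda)$ separately. Since being equivalent by finite translations is symmetric, it suffices to establish one of them, say $\aprank(\Lambda) \le \aprank(\Gamma)$, and the other follows by interchanging the roles of $\Lambda$ and $\Gamma$. Fix a finite set $F \subseteq \RR^d$ with $\Lambda \subseteq \Gamma + F$, which exists by the definition of equivalence by finite translations. Write $k := \aprank(\Lambda)$; the goal is to produce, for every $N \in \NN$, an li-arithmetic progression of length $N$ and rank $k$ inside $\Gamma$, which by the definition of ap-rank forces $\aprank(\Gamma) \ge k$.

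The key tool is the higher dimensional van der Waerden theorem (Thm.~\ref{theo:vdW}). Given $N$, set $N' := W(|F|, N, k)$, applying the theorem with $|F|$ colours, depth $N$, and dimension $k$. By the definition of ap-rank, $\Lambda$ contains an li-arithmetic progression $A = \{ s + \sum_{i=1}^k c_i r_i : 0 \le c_i \le N' \}$ with $r_1, \dots, r_k$ linearly independent over $\ZZ$. Since $A \subseteq \Lambda \subseteq \Gamma + F$, for each index tuple $(c_1, \dots, c_k) \in [N']^k$ one may choose some $f \in F$ with $s + \sum_i c_i r_i \in \Gamma + f$; recording this choice defines a colouring of $[N']^k$ by the $|F|$ elements of $F$. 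By the choice of $N'$ and Thm.~\ref{theo:vdW}, there is a monochromatic grid $[k_1, \dots, k_k; l_1, \dots, l_k; N] \subseteq [N']^k$ of depth $N$, say of colour $f \in F$.

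Finally I would read off the progression. Writing $s' := s + \sum_{i=1}^k l_i r_i$ and $r_i' := k_i r_i$, the monochromatic grid translates into the statement that $s' + \sum_i m_i r_i' \in \Gamma + f$ for all $(m_1, \dots, m_k) \in [N]^k$. Because each $k_i$ is a positive integer and $r_1, \dots, r_k$ are linearly independent over $\ZZ$, the scaled ratios $r_1', \dots, r_k'$ are again linearly independent over $\ZZ$; subtracting $f$ then yields the li-arithmetic progression $\{ (s' - f) + \sum_i m_i r_i' : 0 \le m_i \le N \} \subseteq \Gamma$ of length $N$ and rank $k$. As $N$ was arbitrary, this shows $\aprank(\Gamma) \ge \aprank(\Lambda)$, and symmetry then completes the proof.

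I expect the only real subtlety to lie in the bookkeeping of the van der Waerden step: one must colour the index cube $[N']^k$ rather than the points of $\Lambda$ themselves (a single point may lie in several translates $\Gamma + f$ at once, so the colouring must be pinned down by an explicit choice to be well defined), and then verify that the depth-$N$ monochromatic grid supplied by Thm.~\ref{theo:vdW} has ratios $k_i r_i$ that remain $\ZZ$-linearly independent, so that the resulting progression genuinely has rank $k$ and is automatically proper.
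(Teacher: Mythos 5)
Your proposal is correct and follows essentially the same route as the paper: use the covering $\Lambda \subseteq \Gamma + F$ to colour the index cube $[N']^k$ with $|F|$ colours, invoke the higher dimensional van der Waerden theorem (Thm.~\ref{theo:vdW}) to extract a monochromatic grid of depth $N$, and translate by the common $f \in F$ to land an li-arithmetic progression of rank $k$ inside $\Gamma$, with symmetry handling the reverse inequality. In fact your bookkeeping is slightly cleaner than the paper's (you correctly take $N' = W(|F|,N,k)$ in dimension $k$ and include the grid offsets $\sum_i l_i r_i$ in $s'$, both of which are garbled in the paper's write-up).
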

\begin{proof}
By symmetry it suffices to show that
\[
\aprank(\Lambda) \leq  \aprank(\Gamma)\,.
\]
Let $F = \{f_1, \dots, f_r\}$ be such that
\[
\Lambda \subseteq \Gamma+F
\]
and let $k=\aprank(\Lambda)$. We show that $\Gamma$ contains arithmetic progressions of length $N$ and rank $k$.

First, colour $F$ with $r$ colours, such that no two points of $F$ have the same colour. Next, let $N \in \NN$ be arbitrary and let $N'=W(|F|,N,d)$ be given by Thm.~\ref{theo:vdW}. Since $k=\aprank(\Lambda)$, by definition, there exists an li-arithmetic progression
\[
\{ s+ c_1r_2+\dots.+c_kr_k : 0 \leq c_j \leq N' \}
\]
of length $N'$ and rank $k$ in $\Lambda$. Now, for each $(c_1,\dots,c_k) \in [0,N']^k$ there exists some and $f_j \in F$ so that
\[
 x:= s+ c_1r_2+\dots.+c_kr_k \in \bigcup^{r}_{j=1} \Gamma +f_j\,.
\]
Pick the smallest $j$ such that $x = y+f_j$ for some $y \in \Gamma.$ Colour $(c_1,\dots,c_k)$ with the colour of this $f_j \in F$. Then, by Thm.~\ref{theo:vdW}, there exists a monochromatic grid $[c'_1d_1,\dots,c'_kd_k;k] \subseteq [N']^k$ of depth $N$. Define
\begin{align*}
s'&=s-f_j \\
r_j'&=d_jr_j
\end{align*}
Then, for all $0 \leq c'_j \leq N$
\[
 x = s'+ c'_1r_1'+\dots.+c'_kr'_k \in \Gamma
\]
is an li-arithmetic progression of length $N$ and rank $k$. Since $N \in \NN$ is arbitrary, we get  $k \leq \aprank(\Gamma)$.

\end{proof}

Now, by combining all results so far we get the following Theorem, which is the first main result in the paper.

\begin{theorem}\label{theo:meyer AP}
Let $\Lambda \subseteq \RR^d$ be a Meyer set, and let $\oplam(W)$ be any fully Euclidean model set in $(\RR^d, \RR^m, \cL)$
 and $F \subseteq \RR^d$ be finite such that
\[
\Lambda \subseteq \oplam(W) +F \,.
\]
Then
\[
\aprank(\Lambda)=d+m= \aprank(\oplam(W))=\rank(\oplam(W)) \,.
\]

Moreover, for each $N$ there exists some $R>0$ such that the set $\Lambda \cap B_{R}(y)$ contains an li-arithmetic progression of length $N$ and rank $d+m$ for all $y \in \RR^d$.
\end{theorem}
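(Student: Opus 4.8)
The plan is to read the displayed chain of equalities straight off the results already in hand, and then to upgrade the ap-rank identity to the uniform ``moreover'' statement by re-running the van der Waerden colouring argument of Lemma~\ref{lema: equiv same ap} while keeping the ball radius under control.

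First I would assemble the equalities. By Lemma~\ref{lem:model set generates L} we have $\rank(\oplam(W)) = d+m$, and combining parts (a) and (b) of Thm.~\ref{thm:ap rank model} gives $\aprank(\oplam(W)) = d+m$: part (b) furnishes li-arithmetic progressions of every length and rank $d+m$, so $\aprank(\oplam(W)) \geq d+m$, while part (a) forbids any larger rank, so $\aprank(\oplam(W)) \leq d+m$. Since $\Lambda \subseteq \oplam(W)+F$, Lemma~\ref{equivft} tells us that $\Lambda$ and $\oplam(W)$ are equivalent by finite translations, whence Lemma~\ref{lema: equiv same ap} yields $\aprank(\Lambda) = \aprank(\oplam(W)) = d+m$. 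This settles the displayed equation.

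For the uniform statement, the key extra input is the reverse inclusion: equivalence by finite translations (Lemma~\ref{equivft}) provides a finite set $F' \subseteq \RR^d$ with $\oplam(W) \subseteq \Lambda + F'$. Fix $N \in \NN$ and set $N' := W(|F'|, N, d+m)$ via Thm.~\ref{theo:vdW}. By Thm.~\ref{thm:ap rank model}(b) there is some $R_0 > 0$ such that for every $y \in \RR^d$ the set $\oplam(W) \cap B_{R_0}(y)$ contains an li-arithmetic progression $\{ s + \sum_{j=1}^{d+m} c_j r_j : 0 \leq c_j \leq N' \}$ of length $N'$ and rank $d+m$. I would then colour the index cube $[N']^{d+m}$ by assigning to $(c_1,\dots,c_{d+m})$ the smallest-index $f \in F'$ for which $s + \sum_j c_j r_j \in \Lambda + f$; such an $f$ exists because the point lies in $\oplam(W) \subseteq \Lambda + F'$. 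Applying Thm.~\ref{theo:vdW} produces a monochromatic grid of depth $N$ inside $[N']^{d+m}$; translating the corresponding points by $-f$ lands them in $\Lambda$, and after absorbing the grid offsets into a new base point and rescaling the ratios (exactly as in the proof of Lemma~\ref{lema: equiv same ap}) this yields an li-arithmetic progression of length $N$ and rank $d+m$ contained in $\Lambda$.

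Finally I would track the radius. Every point of the monochromatic grid lies in $B_{R_0}(y)$, so after translating by $-f$ it lies in $B_{R_0 + \|f\|}(y)$; hence setting $R := R_0 + \max_{f \in F'} \|f\|$ places the whole progression in $\Lambda \cap B_R(y)$, uniformly in $y$. The main obstacle is precisely this last point: Lemma~\ref{lema: equiv same ap} only asserts equality of ap-ranks globally, so to obtain the local statement I cannot merely quote it; instead I must feed the \emph{uniform} progressions from Thm.~\ref{thm:ap rank model}(b) into the colouring argument and verify that passing to a monochromatic subgrid and translating by the finite set $F'$ enlarges the controlling ball only by the bounded amount $\max_{f \in F'}\|f\|$.
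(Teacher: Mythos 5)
Your proof is correct and follows essentially the same route as the paper: the displayed equalities are assembled from the same lemmas (Lemma~\ref{equivft}, Lemma~\ref{lema: equiv same ap}, Thm.~\ref{thm:ap rank model}), and the uniform statement is obtained by colouring according to membership in the translates $\Lambda+f$, $f\in F'$, applying van der Waerden, and translating back with the radius controlled by $R = R_0 + \max_{f\in F'}\|f\|$, exactly as in the paper. The only cosmetic difference is that you inline the colouring argument (colouring the index cube $[N']^{d+m}$ and invoking Thm.~\ref{theo:vdW} directly), whereas the paper first packages it as the monochromatic result Thm.~\ref{theo:mono model} for $\oplam(W)$ and then applies that theorem with the colouring induced by the translates; the underlying argument is identical.
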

\begin{proof}

By Lemma~\ref{equivft}, $\Lambda$ and $\oplam(W)$ are equivalent by finite translates. Therefore, by Lemma~\ref{lema: equiv same ap} we have
\[
\aprank(\Lambda)= \aprank(\oplam(W)) \,.
\]
Moreover, by Thm.~\ref{thm:ap rank model} we have
\[
\rank(\oplam(W))=\aprank(\oplam(W))=d+m \,.
\]
Next, since $\Lambda$ and $\oplam(W)$ are equivalent by finite translates, there exists some finite set $F= \{t_1^{},\dots,t_r^{} \}$ such that
\[
\oplam(W) \subseteq \Lambda +F.
\]
Colour $F$ with $r$ colours so that each point of $F$ has a different colour. Let $R'$ be the constant given by Thm.~\ref{theo:mono model} for  $\oplam(W)$ with $r$ colours and length $N$. Define
\[
R := \max\{R' + ||t_j^{}|| : 1 \leq j \leq r \}\,.
\]
Colour $\oplam(W)$ the following way: for each $x \in \oplam(W)$, there exists some minimal $j$ such that $x \in t_j^{} + \Lambda$. Colour each $x$ by the colour of $t_j^{} $ for this minimal $j$. This gives an $r$-colouring of $\oplam(W)$. Note here that any choice of $t_j$ works, but one needs to make a choice in case some $x \in \oplam(W)$ belongs to $t_j^{} + \Lambda$ for more than one $j$.\\

Now let $y \in \mathbb{R}^d$ be arbitrary. By Thm.~\ref{theo:mono model} there exists a monochromatic li-arithmetic progression $s+\sum_{i=1}^{m+d}c_i^{}r_i^{}\in \oplam(W) \cap B_{R'}^{}(y)$ of rank $m+d$, for all $0 \leq c_i^{} \leq N$.

Since the arithmetic progression is monochromatic, there exists some $j$ such that, for all $0 \leq c_i \leq N$, we have
\[
s+\sum_{i=1}^{m+d}c_i^{}r_i^{} \in t_j^{} + \Lambda\,.
\]
Thus, for all $0 \leq c_i^{} \leq N$, we have
\[
s-t_j^{}+\sum_{i=1}^{m+d}c_i^{}r_i \in \Lambda
\]
which is an li-arithmetic progression of length $N$ and rank $m+d$. Moreover, for each $0 \leq c_i^{}, \leq N$
\[
d(s-t_j^{} +\sum_{i=1}^{m+d}c_i^{}r_i, y) =||s+\sum_{i=1}^{m+d}c_i^{}r_i - t_j^{} -y|| \leq ||s+\sum_{i=1}^{m+d}c_i^{}r_i -y|| + ||t_j^{}|| \leq R \,.
\]
This proves the last claim.

\end{proof}

We start by listing some consequences of this result.

A first immediate consequence we get that if $\Lambda$ is a Meyer set, $\oplam(W)$ a fully Euclidean model set in $(\RR^d, \RR^m, \cL)$ and $F$ is a finite set such that
\[
\Lambda \subseteq \oplam(W) +F
\]
then $m$ is invariant for $\Lambda$.

\begin{corollary} Let $\Lambda \subseteq \RR^d$ be any Meyer set. If $(\RR^d, \RR^m, \cL)$ and  $(\RR^d, \RR^n, \cL)$ are two different CPS, $W \subseteq \RR^m, W' \subseteq \RR^n$ are precompact windows with non-empty interior and $F,F'$ are finite sets such that
\begin{align*}
  \Lambda &\subseteq \oplam(W)+F \\
  \Lambda &\subseteq \oplam(W')+F' \,,
\end{align*}
then $m=n$.
\end{corollary}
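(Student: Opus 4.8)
The plan is to observe that the corollary is an immediate consequence of Theorem~\ref{theo:meyer AP}, exploiting the crucial fact that $\aprank(\Lambda)$ is defined intrinsically in terms of $\Lambda$ alone. In particular, the ap-rank does not reference any ambient model set, so any two ways of computing it must agree. The strategy is thus to compute $\aprank(\Lambda)$ twice, once via each covering, and compare.

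First I would apply Theorem~\ref{theo:meyer AP} to the inclusion $\Lambda \subseteq \oplam(W)+F$, where $\oplam(W)$ is a fully Euclidean model set in the CPS $(\RR^d, \RR^m, \cL)$ with $W$ precompact and of non-empty interior and $F$ finite. The theorem then yields
$$
\aprank(\Lambda) = d+m \,.
$$
Next I would apply the same theorem to the second inclusion $\Lambda \subseteq \oplam(W')+F'$, where $\oplam(W')$ lives in $(\RR^d, \RR^n, \cL)$ with $W'$ precompact of non-empty interior and $F'$ finite, obtaining
$$
\aprank(\Lambda) = d+n \,.
$$

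Since the left-hand side is one and the same quantity in both displays, namely the ap-rank of the fixed Meyer set $\Lambda$, I would conclude $d+m = d+n$, and therefore $m=n$. The only point requiring verification before invoking Theorem~\ref{theo:meyer AP} is that its hypotheses are met in each case: that $W$ and $W'$ are precompact with non-empty interior, so that $\oplam(W)$ and $\oplam(W')$ are genuine fully Euclidean model sets, and that $F, F'$ are finite. All of these are supplied in the statement.

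I do not expect any real obstacle here, as the entire substance has already been absorbed into Theorem~\ref{theo:meyer AP}; the corollary merely records the resulting invariance of the internal dimension. The one conceptual subtlety worth stating explicitly is \emph{why} the two computations are comparable: it is precisely because $\aprank(\Lambda)$ is an invariant of $\Lambda$ itself, and not of any particular cut-and-project presentation, that the equality of the two values of $d+m$ forces $m$ to be intrinsic to $\Lambda$.
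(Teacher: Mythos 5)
Your proposal is correct and is essentially identical to the paper's own proof: the paper likewise applies Thm.~\ref{theo:meyer AP} to each inclusion to get $\aprank(\Lambda)=d+m$ and $\aprank(\Lambda)=d+n$, and concludes $m=n$. Your explicit remark that $\aprank(\Lambda)$ is an intrinsic invariant of $\Lambda$, independent of the cut-and-project presentation, is exactly the (implicit) point making the paper's two-line proof work.
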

\begin{proof}
By Thm.~\ref{theo:meyer AP} we have
\begin{align*}
  \aprank(\Lambda) &=d+m \\
  \aprank(\Lambda) &=d+n \,.
\end{align*}
\end{proof}
 \smallskip

As an immediate consequence, we get the following improvement on the lower bound from Lemma~\ref{ll1}.

\begin{corollary}\label{cor3} Let $\Lambda \subseteq \RR^d$ be any Meyer set. Then
\[
\aprank(\Lambda)\geq d \,.
\]
\end{corollary}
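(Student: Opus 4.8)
The plan is to deduce this immediately from the main theorem of the section, Thm.~\ref{theo:meyer AP}, so that essentially all the work has already been done. First I would recall, via the characterisation of Meyer sets in Thm.~\ref{char mey}, that since $\Lambda \subseteq \RR^d$ is a Meyer set there exists a fully Euclidean model set $\oplam(W)$ in some fully Euclidean CPS $(\RR^d, \RR^m, \cL)$ together with a finite set $F \subseteq \RR^d$ such that $\Lambda \subseteq \oplam(W) + F$. This is exactly the data required to invoke Thm.~\ref{theo:meyer AP}.

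Next I would simply feed this containment into Thm.~\ref{theo:meyer AP}, which gives the exact value
$$
\aprank(\Lambda) = d + m \,.
$$
Since the internal space is $\RR^m$ with $m \geq 0$ (the degenerate case $m = 0$, where $H = \{0\}$ and $\oplam(W)$ is a lattice, is harmless), we conclude $\aprank(\Lambda) = d + m \geq d$, which is the desired inequality.

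There is no genuine obstacle here: the substantive content, namely the determination of the ap-rank and in particular the production of li-arithmetic progressions of arbitrary length and rank $d+m$, is entirely absorbed into Thm.~\ref{theo:meyer AP}. The only thing to observe is the trivial bound $m \geq 0$, so this corollary is really just the remark that $d + m \geq d$. If one wanted a self-contained argument avoiding the full strength of Thm.~\ref{theo:meyer AP}, the natural fallback would be to note that Lemma~\ref{lem:model set generates L} already yields $d + m$ linearly independent ratios inside $\oplam(V_M)$ and then transport an li-arithmetic progression of rank $d+m$ back to $\Lambda$ through the finite-translate equivalence (Lemma~\ref{equivft}) using the van der Waerden colouring argument of Lemma~\ref{lema: equiv same ap}; but since $d + m \geq d$ regardless, invoking Thm.~\ref{theo:meyer AP} directly is the cleanest route.
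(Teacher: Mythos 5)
Your proof is correct and takes the same route as the paper: Cor.~\ref{cor3} is stated there as an immediate consequence of Thm.~\ref{theo:meyer AP}, which via the containment $\Lambda \subseteq \oplam(W)+F$ from Thm.~\ref{char mey} gives $\aprank(\Lambda)=d+m\geq d$. Your added remark that the degenerate case $m=0$ (where $\oplam(W)$ is a lattice) is harmless is a valid point of care that the paper leaves implicit.
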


Next, we show that in general there is no upperbound for $\aprank(\Lambda)$ in terms of $\rank(\Lambda)$.

\begin{example}\label{ex1} Let $\oplam(W)$ be any fully Euclidean model set and let $r_1,\dots,r_k$ be so that
\[
\langle \oplam(W) \rangle = \oplus_{j=1}^k \ZZ r_k \,.
\]
Let $s_1,\dots,s_n$ be so that $r_1,\dots,r_k, s_1,\dots,s_n$ are linearly independent over $\ZZ$ and let $F= \{s_1,\dots,s_n\}$. Then
\[
\Lambda = \oplam(W)+ F
\]
is a Meyer set and
\begin{align*}
\aprank(\Lambda)&=k \\
\rank(\Lambda)&=k+n\,.
\end{align*}
\end{example}

\medskip

\begin{remark} If $\oplam(W)$ is a fully Euclidean model set with $k=\aprank(\Lambda)$ then, by Thm.~\ref{thm:ap rank model} \textbf{every} li-arithmetic progression in $\oplam(W)$ has rank at most $k$. The same is not true in Meyer sets.

Indeed let $k \leq d$ and $N$ be any positive integers. Let $A$ be any li-arithmetic progression of rank $d$ and length $N$ and let $\oplam(W)$ be any fully Euclidean model set of rank $k$ such that $0 \in \oplam(W)$. Then
\[
\Lambda := \oplam(W)+A
\]
is a Meyer set, with $\aprank(\Lambda)=k$, which contains the li-arithmetic progression $A$ of rank $d$ and length $N$.

Note that $\aprank(\Lambda)=k$ means that for all $d >k$, if $\Lambda$ contains li-arithmetic progressions of rank $d$, then they are bounded in length. We will see later in Cor.~\ref{cor2} that for fully Euclidean Meyer sets, the rank of every \textbf{every} li-arithmetic progression is also bounded by the ap-rank.
\end{remark}

\bigskip

We complete the section by extending, as usual, Thm.~\ref{theo:meyer AP} to colourings of $\Lambda$.

\begin{theorem}
Let $\Lambda \subset \RR^d$ be Meyer set, and let $k=\aprank(\Lambda)$. Then, for each $r,N$ there exists some $R$ such that, no matter how we colour $\Lambda$ with $r$ colours, the set $\Lambda \cap B_{R}(y)$ contains a monochromatic li-arithmetic progression of length $N$ and rank $k$ for all $y \in \RR^d$.
\end{theorem}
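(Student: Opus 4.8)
The plan is to combine the monochromatic result for fully Euclidean model sets, Thm.~\ref{theo:mono model}, with the translate-transfer argument already used in the ``moreover'' part of the proof of Thm.~\ref{theo:meyer AP}, by passing to a suitable \emph{product} colouring. By Lemma~\ref{equivft} and Thm.~\ref{theo:meyer AP}, $\Lambda$ is equivalent by finite translates to some fully Euclidean model set $\oplam(W)$ in a CPS $(\RR^d,\RR^m,\cL)$ with $k=\aprank(\Lambda)=d+m$. Fix a finite set $F=\{t_1,\dots,t_s\}$ with $\oplam(W)\subseteq\Lambda+F$.

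First I would fix the radius before seeing the colouring. Applying Thm.~\ref{theo:mono model} to $\oplam(W)$ with $s\cdot r$ colours and length $N$ produces some $R'>0$; then set $R:=R'+\max_{1\le j\le s}\|t_j\|$. Next, given an arbitrary colouring $\chi$ of $\Lambda$ with $r$ colours, I would transfer it to a colouring of $\oplam(W)$ with $s\cdot r$ colours as follows: for each $x\in\oplam(W)$ let $j(x)$ be the minimal index with $x-t_{j(x)}\in\Lambda$ (which exists since $\oplam(W)\subseteq\Lambda+F$), and colour $x$ by the pair $(j(x),\chi(x-t_{j(x)}))$. For any $y\in\RR^d$, Thm.~\ref{theo:mono model} then yields a monochromatic li-arithmetic progression $\{s_0+\sum_{i=1}^{d+m}c_i r_i : 0\le c_i\le N\}\subseteq \oplam(W)\cap B_{R'}(y)$ of rank $d+m$ for this product colouring.

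Finally, monochromaticity in the product colouring forces both a constant translate index $j_0$ and a constant $\chi$-colour along the whole progression. Consequently the set $\{(s_0-t_{j_0})+\sum_{i=1}^{d+m}c_i r_i : 0\le c_i\le N\}$ lies in $\Lambda$, is $\chi$-monochromatic, has the same ratios $r_1,\dots,r_{d+m}$ (hence is an li-arithmetic progression of rank $d+m=k$ and length $N$), and sits inside $B_R(y)$ since subtracting $t_{j_0}$ moves each point by at most $\|t_{j_0}\|\le R-R'$. This is exactly the deduction made in Thm.~\ref{theo:meyer AP}, now carried out one colouring layer deeper.

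There is no genuine obstacle here; the only point requiring care is bookkeeping: the given $r$-colouring of $\Lambda$ must be amalgamated with the $|F|$-fold translate-assignment colouring so that a single colour class on $\oplam(W)$ simultaneously fixes the $\chi$-colour \emph{and} the translate $t_{j_0}$ used to descend from $\oplam(W)$ back into $\Lambda$, which is why the model set is coloured with $s\cdot r$ rather than $r$ colours. Everything else (preservation of the ratios, the rank, and the length under the translation $s_0\mapsto s_0-t_{j_0}$, and the ball estimate) is routine.
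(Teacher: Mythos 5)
Your proof is correct, but it takes a genuinely different route from the paper's. The paper stays inside $\Lambda$: it picks $N'$ to be a van der Waerden number for $r$ colours, depth $N$ and dimension $k$ (Thm.~\ref{theo:vdW}), invokes Thm.~\ref{theo:meyer AP} to place an \emph{uncoloured} li-arithmetic progression of length $N'$ and rank $k$ inside $\Lambda\cap B_R(y)$, and then pulls the given $r$-colouring of $\Lambda$ back to the index set $[N']^{k}$ of that progression; Thm.~\ref{theo:vdW} produces a monochromatic grid of depth $N$, which is a monochromatic li-sub-progression of length $N$ whose new ratios $d_jr_j$ remain $\ZZ$-linearly independent. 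You instead work at the level of the model set: you amalgamate the given $r$-colouring with the translate-index assignment into an $s\cdot r$-colouring of $\oplam(W)$, apply Thm.~\ref{theo:mono model} there, and descend back into $\Lambda$ by the single translate $t_{j_0}$ that monochromaticity pins down. Your bookkeeping is in order: $j(x)$ exists because $\oplam(W)\subseteq\Lambda+F$ (Lemma~\ref{equivft} via Thm.~\ref{char mey}), the radius $R=R'+\max_j\|t_j\|$ is fixed before the colouring is revealed, the ratios and hence the rank $d+m=k$ survive the translation, and the ball estimate is the same as in the last part of Thm.~\ref{theo:meyer AP}. Conceptually you have proved a colouring analogue of Lemma~\ref{lema: equiv same ap}: the monochromatic li-AP property of a given rank is invariant under equivalence by finite translates. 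What the paper's route buys is combinatorial modularity: once Thm.~\ref{theo:meyer AP} gives arbitrarily long li-progressions of rank $k$ locally, the monochromatic statement follows from van der Waerden alone, with no further reference to the CPS or to the finite set $F$. What your route buys is a reusable transfer principle for colouring statements, and it concentrates every appeal to van der Waerden in the single model-set result Thm.~\ref{theo:mono model}, rather than applying it a second time inside $\Lambda$.
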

\begin{proof}
Pick $N'$ such that van der Waerden's Theorem holds for $r,k$ applied to $[N']^{d+m}$. By Thm.~\ref{theo:meyer AP} there exists $R>0$ such that for all $y \in \mathbb{R}^d$ the set $\Lambda \cap B_R^{}(y)$ contains an arithmetic progression of length $N$ and dimension $k$. The rest of the proof is identical to that of Thm.~\ref{theo:mono model}.
\end{proof}

\section{A characterisation of fully Euclidean Meyer sets}

We complete this paper by characterizing fully Euclidean Meyer sets. To our knowledge, this is the first result in this direction.

\begin{theorem}\label{fully euc meyer} Let $\Lambda \subseteq \RR^d$ be a Meyer set. Then $\Lambda$ is a fully Euclidean Meyer set if and only if
\[
\aprank(\Lambda)=\rank(\Lambda)\,. \\
\]
\end{theorem}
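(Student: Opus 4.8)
The plan is to prove both implications, reducing everything to a commensurability statement between $\langle\Lambda\rangle$ and the module $L=\pi^{\RR^d}(\cL)$ of an ambient fully Euclidean CPS, with Thm.~\ref{theo:meyer AP} and Thm.~\ref{char mey} doing the heavy lifting.

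For the forward implication, suppose $\Lambda\subseteq\oplam(W)$ for a fully Euclidean model set in a CPS $(\RR^d,\RR^m,\cL)$. Applying Thm.~\ref{theo:meyer AP} with $F=\{0\}$ gives $\aprank(\Lambda)=d+m$. On the other hand $\Lambda\subseteq\oplam(W)\subseteq L$, so $\langle\Lambda\rangle$ is a submodule of the free module $L$ of rank $d+m$ (Lemma~\ref{lem:model set generates L}), whence $\rank(\Lambda)\le d+m$ by Lemma~\ref{lem:subsmodfree is free}. Since always $\aprank(\Lambda)\le\rank(\Lambda)$ (Lemma~\ref{ll1}), we obtain $d+m=\aprank(\Lambda)\le\rank(\Lambda)\le d+m$, forcing equality.

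For the converse, assume $\aprank(\Lambda)=\rank(\Lambda)$. By Thm.~\ref{char mey}(ii) there is a fully Euclidean model set $\oplam(W)$ in a CPS $(\RR^d,\RR^m,\cL)$ and a finite $F$ with $\Lambda\subseteq\oplam(W)+F$, and Thm.~\ref{theo:meyer AP} gives $\aprank(\Lambda)=d+m$, so $\rank(\Lambda)=d+m$. Let $\widehat L:=\{x\in\RR^d:\exists\,n\in\NN,\ nx\in L\}$ be the divisible hull of $L$, a $\QQ$-vector space of dimension $d+m$ containing $L$. The crux is to show $\langle\Lambda\rangle\subseteq\widehat L$. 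To this end I would take an li-progression in $\Lambda$ of rank $d+m$ and length $N>|F|$, with ratios $r_1,\dots,r_{d+m}$; for each $j$, varying only $c_j$ produces $N+1>|F|$ progression points, so by the pigeonhole principle two of them lie in a common coset $\oplam(W)+f$, and their difference, a nonzero integer multiple of $r_j$, lies in $\oplam(W)-\oplam(W)\subseteq L$. Hence each $r_j\in\widehat L$. As the $r_j$ are $\QQ$-linearly independent and number $d+m=\dim_\QQ\widehat L$, they are a $\QQ$-basis of $\widehat L$; since $\langle r_1,\dots,r_{d+m}\rangle\subseteq\langle\Lambda\rangle$ and both modules have rank $d+m$, this yields $\langle\Lambda\rangle\subseteq\widehat L$. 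Thus $L$ and $\langle\Lambda\rangle$ are full-rank submodules of $\widehat L$, and applying Lemma~\ref{nM N} to $L,\langle\Lambda\rangle\subseteq L+\langle\Lambda\rangle$ produces positive integers $a,b$ with $aL\subseteq\langle\Lambda\rangle$ and $b\langle\Lambda\rangle\subseteq L$.

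It remains to build the model set. I would extend the $\ZZ$-homomorphism $\star\colon L\to\RR^m$ to a $\QQ$-linear map $\widehat L\to\RR^m$, still denoted $\star$, so that $x^\star=\tfrac1b(bx)^\star$ for $x\in\langle\Lambda\rangle$, and set $\cL':=\{(x,x^\star):x\in\langle\Lambda\rangle\}$. Then $b\cL'\subseteq\cL$ and $a\cL\subseteq\cL'$, so $\cL'$ is commensurable with the lattice $\cL$ and is therefore itself a lattice in $\RR^d\times\RR^m$; moreover $\pi^{\RR^d}|_{\cL'}$ is injective and $\pi^{\RR^m}(\cL')=\star(\langle\Lambda\rangle)\supseteq aL^\star$ is dense, so $(\RR^d,\RR^m,\cL')$ is a fully Euclidean CPS with $\pi^{\RR^d}(\cL')=\langle\Lambda\rangle\supseteq\Lambda$. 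Writing each $\lambda\in\Lambda$ as $\lambda=\ell+f$ with $\ell\in\oplam(W)$ and $f\in F$ (so $\ell^\star\in W$ and $f=\lambda-\ell\in\widehat L$), we get $\lambda^\star=\ell^\star+f^\star$, which lies in the precompact set obtained from $W$ translated by the finitely many relevant values $f^\star$. Taking $W'$ to be the closure of this set enlarged by a small ball gives a precompact window with nonempty interior and $\Lambda\subseteq\oplam{}'(W')$, so $\Lambda$ is a fully Euclidean Meyer set. The main obstacle is the commensurability step $\langle\Lambda\rangle\subseteq\widehat L$: this is precisely where the hypothesis $\aprank(\Lambda)=\rank(\Lambda)$ is used, first to guarantee li-progressions of the full rank $d+m$ and then, via pigeonhole, to trap their ratios inside the divisible hull of $L$; everything afterwards is a routine extension of the $\star$-map by $\QQ$-linearity.
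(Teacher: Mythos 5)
Your proof is correct, and while the forward implication coincides with the paper's (both squeeze $d+m=\aprank(\Lambda)\le\rank(\Lambda)\le\rank(\oplam(W))=d+m$), your converse takes a genuinely different route in its two key steps. For commensurability, the paper partitions $\Lambda=\bigcup_j\Lambda_j$ with $\Lambda_j\subseteq\oplam(W)+f_j$, forms $\Gamma=\bigcup_j(\Lambda_j-\Lambda_j)\subseteq\oplam(W-W)$, proves $\Gamma$ is relatively dense (hence itself a Meyer set), and runs Thm.~\ref{theo:meyer AP} a \emph{second} time, now on $\Gamma$, to get $\rank(\langle\Gamma\rangle)=\rank(\langle\Lambda\rangle)=d+m$ before invoking Lemma~\ref{nM N}; you instead pigeonhole a single full-rank li-progression of length exceeding $|F|$ against the finite cover $\bigcup_{f\in F}\bigl(\oplam(W)+f\bigr)$, trapping a nonzero integer multiple of each ratio $r_j$ inside $\oplam(W)-\oplam(W)\subseteq L$, and then do rank counting in the divisible hull $\widehat{L}=\QQ L$. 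This is more elementary: it bypasses the relative-density argument for $\Gamma$ and the second pass through the main theorem. (Your compressed step ``both modules have rank $d+m$, hence $\langle\Lambda\rangle\subseteq\widehat{L}$'' is sound: Lemma~\ref{nM N} gives $n\langle\Lambda\rangle\subseteq\langle r_1,\dots,r_{d+m}\rangle\subseteq\widehat{L}$, and $\widehat{L}$ is divisible.) For the CPS construction, the paper rescales the original lattice to $\cL'=\tfrac{1}{m}\cL$ and needs the ``WLOG $F$ minimal'' device to show $F\subseteq\pi^{\RR^d}(\cL')$ before translating the window, whereas you take $\cL'$ to be the graph of the $\QQ$-linearly extended star map over $\langle\Lambda\rangle$, obtaining latticehood from the two-sided commensurability $a\cL\subseteq\cL'$ and $b\cL'\subseteq\cL$; every relevant $f\in F$ then lies in $\widehat{L}$ automatically, so no minimality assumption is needed. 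Both proofs use the hypothesis $\aprank(\Lambda)=\rank(\Lambda)$ in exactly the same place — to pull $\rank(\langle\Lambda\rangle)$ down to $d+m$ so the commensurability lemma applies — and both finish by enlarging the window by finitely many star images of translation vectors; your version is self-contained where the paper leans again on its heavier machinery, at the modest cost of having to verify that the extended star map and the graph lattice behave as claimed (which, as you say, is routine).
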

\begin{proof}
$\Longrightarrow$

Let $\oplam(W)$ be a fully Euclidean model set such that
\[\Lambda \subseteq \oplam(W)\,.\]
Then, we have $\rank(\Lambda) \leq \rank(\oplam(W))$. Therefore, by Lemma~\ref{ll1} and Thm.~\ref{theo:meyer AP},
\[
\aprank(\Lambda) \leq \rank(\Lambda) \leq \rank(\oplam(W)) = \aprank(\Lambda) \,.
\]
This gives
\[
\aprank(\Lambda) = \rank(\Lambda) \,.
\]

\medskip

$\Longleftarrow$

Let $k:= \aprank(\Lambda)=\rank(\Lambda) \,.$ By Thm.~\ref{char mey} there exists a CPS $(\RR^d, \RR^m, \cL)$, a window $W \subseteq \RR^m$ and a finite set $F=\{ f_1,\dots, f_l \}$ such that
\begin{equation}\label{eq111}
\Lambda \subseteq \oplam(W)+F \,.
\end{equation}
Moreover, without loss of generality we can assume that no proper subset $F'$ of $F$ satisfies \eqref{eq111}. We start by showing that there exists some $n$ so that $n \Lambda \subseteq L=\pi_{\RR^d}(\cL)$.

\smallskip

First, note that by Thm.~\ref{theo:meyer AP} we have
\[
\rank(\oplam(W))=\rank(\oplam(W-W))= k\,.
\]
By \eqref{eq111}, we can partition $\Lambda$ as
\begin{align*}
\Lambda &=\bigcupdot_{j=1}^l \Lambda_j \\
\Lambda_j &\subseteq \oplam(W) +f_j \,.
\end{align*}
Note that for all $1 \leq j \leq l$ we have
\[
\Lambda_j - \Lambda_j \subseteq \oplam(W)- \oplam(W)
\]
and hence
\[
\Gamma:= \bigcup_{j=1}^l (\Lambda_j -\Lambda_j) \subseteq \oplam(W)-\oplam(W) \,.
\]
We claim that $\Gamma:= \bigcup_{j=1}^l (\Lambda_j -\Lambda_j)$ is relatively dense. Indeed, set
\[
J:= \{ j : 1 \leq j \leq l, \Lambda_j \neq \emptyset \} \,.
\]
Then,
\[
\Gamma= \bigcup_{j \in J} \left( \Lambda_j - \Lambda_j \right) \,.
\]

Now, for each $j \in J$ fix some $x_j \in  \Lambda_j$, which exists by the definition of $J$. Let
\[
F'= \{ x_j : j \in J \}\,.
\]
Then, we have
\begin{align*}
\Lambda &= \bigcup_{j \in J} \Lambda_j=  \bigcup_{j \in J} (\Lambda_j -x_j+x_j) \subseteq \bigcup_{j \in J} (\Lambda_j -x_j+F')\\
&=(\bigcup_{j \in J} (\Lambda_j -x_j))+F' \subseteq (\bigcup_{j \in J} (\Lambda_j -\Lambda_j))+F'= \Gamma+F' \,.
\end{align*}
Since $\Lambda$ is relatively dense, it follows immediately that $\Gamma$ is relatively dense. In particular, $\Gamma$ is a Meyer set. Now, by Thm.~\ref{theo:meyer AP} we have
\[
\aprank(\Gamma)=\rank(\oplam(W-W))=k \,.
\]
Since $\Gamma \subseteq  \oplam(W-W)$ we have
\[
k=\aprank(\Gamma)\leq \rank(\Gamma) \leq \rank(\oplam(W-W))= k \,,
\]
and hence
$\rank(\Gamma)= k \,.$
\smallskip

Next, let $L_1 = \langle \Lambda \rangle$ and $L_2 = \langle \Gamma \rangle$ by the $\ZZ$-modules generated by $\Lambda$ and $\Gamma$, respectively.
Since for each $1 \leq j \leq l$ we have $\Lambda_j \subseteq \Lambda$ and hence $\Lambda_j -\Lambda_j \subseteq \Lambda-\Lambda \subseteq L_1$, we get that $L_2$ is a $\ZZ$-submodule of $L_1$.

\smallskip

Now, recall that by Lemma~\ref{lem:model set generates L} we have
\[
L= \langle \oplam(W) \rangle = \langle \oplam(W-W) \rangle \,.
\]
Since $\Gamma \subseteq \oplam(W-W)$ we have $\Gamma \subseteq L$ and hence $L_2$ is a submodule of $L$. Moreover, by the above
\[
\rank(L_1)=\rank(L_2)=k
\]
by Lemma~\ref{nM N} there exists some positive integer $n$ such that $nL_1 \subseteq L_2$. Therefore
\[
n \Lambda \subseteq n L_1 \subseteq L_2 \subseteq L \,,
\]
as claimed. Next, let $v_1,\dots, v_{d+m}$ be the vectors so that
\[
\cL= \ZZ v_1 \oplus \ldots \oplus \ZZ v_{m+d} \,.
\]
Now, by enlarging the lattice $\cL$ we can make sure that $\Lambda$ is inside the projection of the lattice on $\RR^d$. Indeed, for each $1 \leq  j \leq m+d$ let
\[
w_j= \frac{1}{n} v_j
\]
and set
\[
\cL':=  \ZZ w_1 \oplus \ldots \oplus \ZZ w_{m+d}  \,.
\]
Then, it is obvious that $(\RR^d, \RR^m, \cL')$ is a CPS and that
\[
L'=\pi_{\RR^d}(\cL')= \frac{1}{m} \pi_{\RR^d}(\cL)= \frac{1}{n} L \,.
\]
Moreover, by construction we have $\cL \subseteq \cL'$. This gives
\[
\Lambda \subseteq \frac{1}{n} L=L' \,.
\]
We complete the proof by showing that $F \subseteq L'$. The result will follow from this.

Let $f \in F$ be arbitrary. By the minimality of $F$ we have
\[
\Lambda \nsubseteq \oplam(W)+(F \backslash \{ f\}) \,.
\]
Therefore, there exists some $x \in \Lambda$ such that $x \notin \oplam(W)+(F \backslash \{ f\})$. However, as $x \in \Lambda \subseteq \oplam(W)+ F$, we get that
\[
x \in \oplam(W) +f  \,.
\]
Thus, there exists some $y \in \oplam(W)$ such that $x=y+f$. It follows that
\[
f=x-y \in \Lambda - \oplam(W) \subseteq L' -L \subseteq L'-L'=L' \,,
\]
as claimed. Therefore, for all $1 \leq j \leq l$, there exists some $g_j \in \RR^m$ such that $(f_j,g_j) \in \cL'$. Define
\[
W'= \bigcup_{j=1}^l g_j +W \subseteq \RR^m \,.
\]
Then, $W'$ is pre-compact and has non-empty interior.

We show that
\[
\Lambda \subseteq \{ x \in L' : \exists y \in W'  \mbox{ such that } (x,y) \in \cL' \}=: \oplam'(W') \,,
\]
which, as $\oplam'(W')$ is a fully Euclidean model set in the CPS $(\RR^d, \RR^m, \cL')$, completes the proof.

Let $x \in \Lambda$ be arbitrary. Then, as $\Lambda \subseteq \oplam(W)+F$, there exists some $y \in \oplam(W)$ and $1 \leq j \leq l$ such that
\[
x=y+f_j \,.
\]
Since $y \in \oplam(W)$, there exists some $z \in W$ so that $(y,z) \in L \subseteq L'$. Then, we have
\begin{align*}
  (x,z+g_j) &=(y,z)+(f_j,g_j) \in \cL+\cL' \subseteq \cL'+\cL'=\cL' \\
  z+g_j &\in g_j+W \subseteq W' \,,
\end{align*}
and hence $x \in \oplam'(W')$.

\end{proof}

\begin{remark} Thm.~\ref{fully euc meyer} can be equivalently stated as follows.

Let $\Lambda \subseteq \RR^d$ be a Meyer set. Then, $\Lambda$ is a fully Euclidean Meyer set if and only if for $k = \rank(\Lambda)$, for each $N \in \NN$ there exist some $s, r_1,\dots,r_k \in \RR^d$ such that $r_1,\dots,r_k$ are linearly independent over $\ZZ$ and
for all $0 \leq C_j \leq N$ we have
\[
s+C_1r_1+\dots+C_kr_k \in \Lambda \,.
\]
\end{remark}

\smallskip

Note here that if $A$ is a li-arithmetic progression of length $N \geq 1$ in a fully Euclidean Meyer set $\oplam(W)$, with ratios $r_1,r_2,\dots,r_k$, then we have $r_1,r_2,\dots,r_k \in \langle \Lambda \rangle$ which gives $k \leq \rank(\oplam(W))$. Therefore, we get the following.

\begin{corollary}\label{cor2} Let $\Lambda \subseteq \RR^d$ be a fully Euclidean Meyer set with $\rank(\Lambda)=k$. Then,
\begin{itemize}
    \item[(a)] Every li-arithmetic progression in $\Lambda$ of length $N \geq 1$ has rank at most $k$.
    \item[(b)] For each $N \in \NN$ there exists a li-arithmetic progression in $\Lambda$ of rank $k$ and length $N$.
\end{itemize}

\end{corollary}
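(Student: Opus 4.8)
The plan is to derive both parts directly from results already in hand, as neither requires new machinery.

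For part (a), let $A$ be any li-arithmetic progression in $\Lambda$ of length $N \geq 1$, with base point $s$ and ratios $r_1,\dots,r_n$; since $A$ is an li-progression, its rank equals $n$. Because $N \geq 1$, the base point $s$ (taking all $c_i=0$) and each point $s+r_j$ (taking $c_j=1$ and the remaining coefficients equal to $0$) all lie in $\Lambda$. Hence each difference $r_j = (s+r_j)-s$ lies in $\Lambda - \Lambda \subseteq \langle \Lambda \rangle$. The ratios $r_1,\dots,r_n$ are linearly independent over $\ZZ$ by hypothesis, so they form a $\ZZ$-linearly independent family inside the free $\ZZ$-module $\langle \Lambda \rangle$, which has rank $k$. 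By Corollary~\ref{cor1} we obtain $n \leq k$, so the rank of $A$ is at most $k$, as claimed.

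For part (b), I would invoke the characterisation just proved. Since $\Lambda$ is a fully Euclidean Meyer set, Theorem~\ref{fully euc meyer} gives $\aprank(\Lambda)=\rank(\Lambda)=k$. By the very definition of ap-rank, $k$ is the largest integer with the property that for every $N \in \NN$ there exists an li-arithmetic progression of length $N$ and rank $k$ in $\Lambda$; in particular such a progression exists for each $N$, which is exactly the assertion of (b).

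There is no genuine obstacle here; the only point requiring a moment's care is the observation underlying (a) that a length $N \geq 1$ progression supplies, for each index $j$, two of its points whose difference is precisely $r_j$, so that the ratios genuinely lie in $\langle \Lambda \rangle$. This is what licenses the application of Corollary~\ref{cor1}, and it is also the place where the hypothesis $N \geq 1$ (rather than merely $N \geq 0$) is used: for $N=0$ the progression degenerates to the single point $s$ and carries no ratio information.
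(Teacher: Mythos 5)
Your proof is correct and follows essentially the same route as the paper: for (a), the paper likewise observes that the ratios of a length $N\geq 1$ progression are differences of points of $\Lambda$, hence lie in $\langle \Lambda \rangle$, and bounds their number by $\rank(\Lambda)=k$; for (b), the paper likewise reads off the existence statement from Thm.~\ref{fully euc meyer} combined with the definition of ap-rank. Your explicit remark that $r_j=(s+r_j)-s$ and that this is where $N\geq 1$ enters is exactly the point the paper makes, just spelled out more carefully.
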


We complete the paper by giving an explicit example of a Meyer set $\Lambda$ which is not fully Euclidean, and we explicitly construct a CPS which produces it as a non-fully Euclidean Meyer set. In fact, $\Lambda$ will be a regular model in this CPS.

\begin{example}
Let $\textbf{Fib}$ denote the well-known Fibonacci model set with its corresponding CPS $(\RR, \RR, \cL)$, where $\cL= \ZZ(1,1) \oplus \ZZ(\tau, \tau')$. We refer the reader to \cite{TAO} for a full description. Note that $\textbf{Fib}$ is a fully Euclidean regular model set within a 2-dimensional CPS. 

Now, take $\Lambda = \pi + \textbf{Fib}$; it follows that $\Lambda$ is still relatively dense and thus a Meyer set. As the ap-rank is invariant under translates, we have
\[
\aprank(\Lambda) = \aprank(\textbf{Fib}) =2 \,.
\]

Now since $\tau$ is an algebraic integer and $\pi$ is transcendental, $1, \tau, \pi$ are linearly independent over $\ZZ$. It is easy to see that 
$$1, \tau, \pi \subseteq \langle \pi+ \textbf{Fib} \rangle  \subseteq \ZZ+ \ZZ \tau +\ZZ \pi \,.$$
This immediately implies that $\langle \pi+ \textbf{Fib} \rangle = \ZZ+ \ZZ \tau +\ZZ \pi$ and hence 
\[
\rank(\Lambda) = 3 \,,
\]
and hence, by Theorem~\ref{fully euc meyer}, $\Lambda$ is a Meyer set which is not fully Euclidean. 

In fact, $\Lambda$ is a regular model set. Indeed, consider 
$$
\cL:= \{ m+n \tau +k \pi : m,n,k \in \ZZ \} \subseteq \RR \times \left( \RR \times (\ZZ \pi) \right) \,.
$$
Then, it is easy to see that $(\RR, \RR \times (\ZZ \pi), \cL)$ is a CPS and 
$$
\Lambda = \oplam( [-1, \tau-1) \times \{ \pi \}) \,.
$$
\end{example}

\begin{remark}\label{rem 1} Given a CPS $(G, H, \cL)$, a window $W$ and some $a \in G$. As usual let us denote by $L=\pi_G(\cL)$.

If $a \in L$ then $a+\oplam(W)= \oplam(a^\star+W)$ is a model set in the same CPS.

Otherwise, it is shown implicitly in \cite[Prop.~5.6.19]{NS11} that one can make $a + \oplam(W)$ into a model set the following way:
  
Let $H_0$ be the cyclic subgroup of $G/L$ generated by $a+L$. Define 
$$
\cL':= \{ (x+na, na+L, x^\star) : (x,x^\star) \in \cL , n \in \ZZ \} \,.
$$
Then, $(G, H \times H_0, \cL')$ is a CPS and 
$$
a+ \oplam(W) = \oplam'( W \times \{ a+L \}) \,.
$$
Let us note here in passing that $H_0$ is a cyclic group of order at least $2$, so it is isomorphic to either some $\ZZ/n\ZZ$ or to $\ZZ$. 
\end{remark}

\begin{remark} Let $\Lambda \subseteq \RR^d$ be a Meyer set. Then, by  \cite[Cor.~5.9.20.]{NS11} and the structure theorem for compactly generated LCAG,
there exists a CPS $(\RR^d, \RR^m \times \ZZ^n \times \KK, \cL)$ with $\KK$ a compact Abelian group, and some compact $W \subseteq \RR^m \times \ZZ^n \times \KK$ such that 
$$
\Lambda \subseteq \oplam(W) \,.
$$
Now, let 
$$
\pi: \RR^m \times \ZZ^n \times \KK \to \RR^m \times \ZZ^n
$$
be the canonical projection and let 
$$
\cL':= \{ (x, \pi(x^\star)) : (x,x^\star) \in \cL \} 
$$
Then, $(\RR^d, \RR^m \times \ZZ^n, \cL')$ is a CPS and 
$$
\Lambda \subseteq \oplam(W) \subseteq \oplam'(\pi(W)) \,.
$$
This shows that every Meyer set $\Lambda \subseteq \RR^d$ is a subset of a model set in a CPS of the form $(\RR^d, \RR^m \times \ZZ^n, \cL)$, with $n \geq 0$. If the Meyer set is not
fully Euclidean, then every such CPS must have $n >0$. We suspect that the smallest possible value $n$ can take among all the CPS of this form is exactly $$n=\rank(\Lambda)-\aprank(\Lambda) \,.$$
\end{remark}

\subsection*{Acknowledgments}  The work was supported by NSERC with grant 2020-00038, and we are grateful for the support. We would also like to thank the anonymous reviewer for many helpful suggestions which improved the quality of the paper.


\begin{thebibliography}{99}

\bibitem{AGNS}
I. Aedo, U. Grimm, Y. Nagai, P. Staynova, \textit{On long arithmetic progressions in binary Morse-like words}, preprint, 2021.
\texttt{arXiv:2101.02056}.


\bibitem{TAO}
M.~Baake, U.~Grimm, \textit{Aperiodic Order. Vol.~1: A Mathematical Invitation}, Cambridge University Press, Cambridge, 2013.

\bibitem{TAO2}
M.~Baake, U.~Grimm, \textit{Aperiodic Order. Vol.~2:  Crystallography and Almost Periodicity}, Cambridge University Press, Cambridge, 2017.

\bibitem{BG2}
M. Baake,  U.~Grimm, \textit{A comment on the relation between diffraction and entropy},
Entropy \textbf{14}, 856--864, 2012.
\texttt{arXiv:1205.0392}.


\bibitem{BLR}
M.~Baake, D.~Lenz, C.~Richard, \textit{Pure point diffraction implies zero entropy for Delone sets with uniform cluster frequencies}, Lett. Math. Phys. \textbf{82}, 61--77, 2007.

\bibitem{BM}
M. Baake, R.V. Moody, \textit{Weighted Dirac combs with pure point diffraction},
J.\ reine angew.\ Math.\ (Crelle) \textbf{573}, 61--94, 2004.
\texttt{arXiv:math.MG/0203030}.


\bibitem{DF}
D. S. Dummit, R. M. Foote, \textit{Abstract Algebra}, third edition, Wiley \& Sons, NJ, 2004.


\bibitem{AlgBook}
P. E. Grillet, \textit{Abstract Algebra}, Graduate Texts in Mathematics \textbf{242}, Springer, New York, 2007.

\bibitem{Hof1}
A.~Hof, \textit{On diffraction by aperiodic structures}, Commun.~Math.~Phys. \textbf{169}, 25--43, 1995.



\bibitem{KST}
A.~Klick, N.~Strungaru, A.~Tcaciuc, \textit{On arithmetic progressions in model sets}, Discr. Comput. Geom., in press, 2021.
\texttt{arXiv:2003.13860}.


\bibitem{LAG1}
J. Lagarias, \textit{Meyer's concept of quasicrystal and
quasiregular sets},  Commun. Math. Phys. \textbf{179}, 365--376, 1996.


\bibitem{LAG}
J. ~Lagarias, \textit{Mathematical quasicrystals and the problem
of diffraction}. In: \textit{Directions in Mathematical Quasicrystals}
eds. M. Baake and R.V Moody, CRM Monograph Series \textbf{13},
AMS, Providence, RI, pp. 61--93, 2000.


\bibitem{LSS} D. Lenz, T. Spindeler, N. Strungaru, \textit{Pure point diffraction and mean, Besicovitch and Weyl almost periodicity}, preprint, 2020.
\texttt{arXiv:2006.10821}.

\bibitem{Maddux} R.~D.~Maddux, \textit{Gallai's Theorem}, preprint. 2014. \texttt{arxiv:1411.1038}.

\bibitem{Meyer}
Y.~Meyer, \textit{ Algebraic Numbers and  Harmonic Analysis},
North-Holland, Amsterdam, 1972.





\bibitem{MOO} R. V. Moody, \textit{Meyer sets and their duals}. In: \textit{The Mathematics of Long-Range
Aperiodic Order}, ed. R. V. Moody, NATO ASI Series \textbf{C489},
Kluwer, Dordrecht, pp. 403--441, 1997.

\bibitem{Moody}
R.~V.~Moody, \textit{Model sets: A Survey}. In: \textit{From
Quasicrystals to More Complex Systems}, eds.\ F.\ Axel, F.\
D\'enoyer and J.\ P.\ Gazeau, EDP Sciences, Les Ulis, and
Springer, Berlin, pp.\ 145--166, 2000.
\texttt{arXiv:math.MG/0002020}.

\bibitem{NAL}
Y.~Nagai, S. Akiyama, J.Y. Lee, \textit{On arithmetic progressions in non-periodic self-affine tilings}, Ergod. Th.\& Dynam. Syst, in press, 2021.
\texttt{arXiv:2007.06005}.


\bibitem{CR}
C.~Richard, \textit{Dense Dirac combs in Euclidean space
with pure point diffraction}, J. Math. Phys. \textbf{44}, 4436--4449, 2003.
\texttt{arXiv:math-ph/0302049}.



\bibitem{Martin2}
M.~Schlottmann, \textit{Generalized model sets and dynamical
systems}. In: \textit{Directions in Mathematical Quasicrystals}, eds. M.~Baake and R.V.~Moody, CRM Monogr. Ser., Amer. Math. Soc., Providence, RI, pp. 143--159, 2000.


\bibitem{She}
D. Shechtman, I. Blech, D. Gratias, J.  W. Cahn,
\textit{ Metallic phase with long-range orientational order
and no translation symmetry},
Phys.\ Rev.\ Lett.\ \textbf{53}, 183--185, 1984.


\bibitem{NS1}
N.~Strungaru, \textit{Almost periodic measures and long-range
order in Meyer sets}, Discr. Comput. Geom. \textbf{33}, 483--505, 2005.


\bibitem{NS2}
N.~Strungaru, \textit{On the Bragg diffraction spectra of a Meyer set}, Canad. J. Math. \textbf{65}, 675--701, 2013.
\texttt{arXiv:1003.3019}.


\bibitem{NS5}
N.~Strungaru, \textit{On weighted Dirac combs supported inside model sets}, J. Phys. A: Math. Theor. \textbf{47},  2014.
\texttt{arXiv:1309.7947}.

\bibitem{NS11}
N.~Strungaru, \textit{Almost periodic pure point measures}. In: \cite{TAO2}, pp., 271--342, 2017.\\ \texttt{arXiv:1501.00945}.

\bibitem{NS20}
N.~Strungaru, \textit{On the Fourier analysis of measures with Meyer set support}, J. Funct. Anal. \textbf{278}, 30 pp., 2020.
\texttt{arXiv:1807.03815}.

\bibitem{NS21}
N.~Strungaru, \textit{Why do Meyer sets diffract?}, preprint, 2021.
\texttt{arXiv:2101.10513}.



\bibitem{vdW}
B. L. van der Waerden, \textit{Beweis einer Baudetschen Vermutung}, Nieuw Arch. Wisk. \textbf{15}, 212--216, 1927.





\end{thebibliography}
\end{document}